\documentclass[10pt]{amsart}
\usepackage{graphicx}
\usepackage{amscd}
\usepackage{amsmath}
\usepackage{amsthm}
\usepackage{amsfonts}
\usepackage{amssymb}
\usepackage{mathrsfs}
\usepackage{bm}
\usepackage{enumerate}
\usepackage{amsrefs}
\usepackage{xcolor}
\usepackage[colorlinks, citecolor=blue, linkcolor=red, pdfstartview=FitB]{hyperref}
\usepackage[latin1]{inputenc}

\usepackage{tikz}
\usepackage{tikz-cd}
\usepackage{caption}
\usetikzlibrary{decorations.markings}
\usepackage{lipsum}
\usepackage{mathrsfs}

\usepackage{marginnote}	
\usepackage{soul} 			


\newcommand{\bM}{{\mathbb{M}}}

\newcommand{\bR}{{\mathbb{R}}}



\newcommand{\fK}{{\mathfrak{K}}}


\newcommand{\rC}{\mathrm{C}}


\newcommand{\eps}{\varepsilon}
\renewcommand{\phi}{\varphi}

\newcommand{\upchi}{{\raise.35ex\hbox{$\chi$}}}



\newcommand{\mycomment}[1]{}



\newcommand{\id}{\operatorname{id}}

\newcommand{\Bor}{\operatorname{Bor}}

\newtheorem{lemma}{Lemma}[section]
\newtheorem{theorem}[lemma]{Theorem}
\newtheorem{proposition}[lemma]{Proposition}
\newtheorem{corollary}[lemma]{Corollary}
\newtheorem{theoremx}{Theorem}

\theoremstyle{definition}

\date{\today}

\author{Rapha\"el Clou\^atre}
\address{Department of Mathematics, University of Manitoba, Winnipeg, Manitoba, Canada R3T 2N2}
\email{raphael.clouatre@umanitoba.ca\vspace{-2ex}}

\author{Ian Thompson}
\address{Department of Mathematical Sciences, University of Copenhagen, Universitetsparken 5, 2100 Copenhagen, Denmark}
\email{ian@math.ku.dk\vspace{-2ex}}

\thanks{R.C. was partially supported by an NSERC Discovery Grant. I.T. was partially supported by an NSERC CGS-D Scholarship.}

\title[Rigidity of operator systems]{Rigidity of operator systems: tight extensions and noncommutative measurable structures}
\begin{document}
\begin{abstract}
Let $A$ be a unital $\rC^*$-algebra generated by some separable operator system $S$.
More than a decade ago, Arveson conjectured that $S$ is hyperrigid in $A$ if all irreducible representations of $A$ are boundary representations for $S$. Recently, a counterexample to the conjecture was found by Bilich and Dor-On. To circumvent the difficulties hidden in this counterexample, we exploit some of  Pedersen's seminal ideas on noncommutative measurable structures 
and establish an amended version of Arveson's conjecture. More precisely, we show that all irreducible representations of $A$ are boundary representations for $S$ precisely when all representations of $A$ admit a unique \emph{tight} completely positive extension from $S$.  In addition, we prove an equivalence between uniqueness of such tight extensions and rigidity of completely positive approximations for representations of nuclear $\rC^*$-algebras, thereby extending the classical principle of Korovkin--\v Sa\v skin for commutative algebras of continuous functions.
\end{abstract}
\maketitle

\section{Introduction  and preliminaries}\label{S:Introduction}

The noncommutative Choquet boundary, as envisioned by Arveson in his seminal work \cite{arveson1969subalgebras}, is now widely recognized  as a lynchpin in the study of operator systems and  nonselfadjoint operator algebras. Over the last few decades, it has blossomed and proven itself to be the most fruitful mechanism to identify the $\rC^*$-envelope, for instance \cite{dritschel2005boundary},\cite{arveson2008noncommutative},\cite{davidson2015choquet}.

This noncommutative boundary is often the key to unlocking deeper structural information about an operator system, yet it still holds many mysteries. Accordingly, it continues to attract interest and to generate new research avenues \cite{davidson2019noncommutative},\cite{clouatre2020finite},\cite{kennedy2021noncommutative},\cite{davidson2022strongly},\cite{clouatre2023minimal}.
Perhaps the most prominent question in this direction was raised by Arveson himself in 2011 \cite{arveson2011noncommutative}. The \emph{hyperrigidity conjecture}, as it is usually referred to, has been the impetus for a substantial body of work by many hands \cite{kleski2014korovkin},\cite{kennedy2015essential},\cite{dor2018full},\cite{clouatre2018multiplier},\cite{clouatre2018non},\cite{clouatre2018unperforated},\cite{salomon2019hyperrigid},\cite{HK2019},\cite{KR2020},\cite{kim2021},\cite{davidson2021choquet},\cite{CT2021},\cite{clouatre2023boundary},\cite{thompson2024approximate},\cite{PS2024} where partial results and supporting evidence for the conjecture were gathered. 

In spite of this, the conjecture was shown to be false very recently \cite{bilich2024arveson}. Even more strikingly, the counterexample found in  \cite{bilich2024arveson} is not overly exotic or pathological, thereby suggesting that there may be something intrinsically missing in Arveson's original formulation of the conjecture. The main results  that we obtain in this paper offer  insight into what an ``amended" notion of hyperrigidity could be.

Let us be more precise. Throughout, $A$ will denote a unital $\rC^*$-algebra and $S\subset A$ will be an operator system that generates $A$ as a $\rC^*$-algebra. Let $H$ be a Hilbert space and let $\pi:A\to B(H)$ be a unital $*$-representation. We say that $\pi$ has the \emph{Arveson rigidity property} with respect to $S$ if, given a net 
\[
\psi_\lambda:A\to B(H),\quad \lambda\in \Lambda
\]
of unital completely positive maps with the property that 
\[
\lim_{\lambda} \|\psi_\lambda(s)-\pi(s)\|=0,\quad s\in S,
\]
we necessarily have that 
\[
\lim_{\lambda} \|\psi_\lambda(a)-\pi(a)\|=0,\quad a\in A.
\]
In other words, $\pi$ has the Arveson rigidity property whenever a net of unital completely positive maps that approximates $\pi$ pointwise on $S$ (in the norm topology) must necessarily approximate $\pi$ pointwise on the entire $\rC^*$-algebra $A$. 

Upon choosing a constant net, we see that if $\pi$ has the Arveson rigidity property, then it automatically has the so-called \emph{unique extension property} with respect to $S$: the only unital completely positive map $\psi:A\to B(H)$ agreeing with $\pi$ on $S$ is $\pi$ itself. Irreducible $*$-representations with the unique extension property with respect to $S$ are called \emph{boundary representations} for $S$. When $A$ is commutative, these boundary representations are simply the characters of evaluation at points in the Choquet boundary of $S$. For this reason, the collection of boundary representations for $S$ is often thought of as the noncommutative Choquet boundary of $S$, which we alluded to above.  A key observation of Arveson \cite[Theorem 2.1]{arveson2011noncommutative} allows one to connect this boundary to the Arveson rigidity property.

\vspace{3mm}

\noindent \textbf{The Arveson rigidity principle. }
The following statements are equivalent.
\begin{enumerate}[\rm (i)]
\item Every unital $*$-representation of $A$ has the unique extension property with respect to  $S$.
\item Every  unital $*$-representation of $A$ has  the Arveson rigidity property with respect to $S$.
\end{enumerate}

\vspace{3mm}

When either of these equivalent statements hold, we say $S$ is \emph{hyperrigid} in $A$.

Another piece of insight from \cite{arveson2011noncommutative} was to draw a connection with classical approximation theory. To properly frame this, we require a variation on Arveson's rigidity property. Let $B$ be another unital $\rC^*$-algebra and let $\pi:A\to B$ be a unital $*$-representation. We say that $\pi$ has the \emph{Korovkin rigidity property} with respect to $S$ if, given a net
\[
\psi_\lambda:A\to \pi(A),\quad \lambda\in \Lambda
\]
of unital completely positive maps with the property that 
\[
\lim_{\lambda} \psi_\lambda(s)=\pi(s) \quad \text{weakly in } \pi(A) \quad \text{for each } s\in S
\]
we necessarily have that 
\[
\lim_{\lambda} \psi_\lambda(a)=\pi(a) \quad \text{weakly in } \pi(A) \quad \text{for each } a\in A.
\]
If the weak topology of $\pi(A)$ is replaced with the norm topology in the previous definition, then we instead say that $\pi$ has the \emph{strong Korovkin rigidity property} with respect to $S$. We shall require both variants in our work below.

We emphasize that the strong Korovkin rigidity property is a priori weaker than the Arveson rigidity property, as the ranges of the maps $\psi_\lambda$ are restricted to lie in $\pi(A)$.  

When $A=\rC([0,1])$, it was shown by Korovkin \cite{korovkin1953convergence} that the identity representation has the strong Korovkin rigidity property with respect to the operator system of quadratic polynomials, hence justifying our choice of terminology. Later,  \v{S}a\v{s}kin \cite{vsavskin1967mil} shed light on this phenomenon by providing a criterion for the identity representation to display such rigidity with respect to any separable  function system. The separability assumption was removed in \cite[Theorem 5.3]{davidson2021choquet}, thus yielding the following.

\vspace{3mm}

\noindent \textbf{The Korokvin--\v{S}a\v{s}kin rigidity principle. }
Assume that $A$ is commutative. Then, the following statements are equivalent.
\begin{enumerate}[\rm (i)]
\item Every irreducible $*$-representation of $A$ is a boundary representation for $S$.
\item The identity representation of $A$ has the strong Korovkin rigidity property with respect to $S$.
\item Every unital $*$-representation of $A$ has  the strong Korovkin rigidity property with respect to $S$.
\end{enumerate}

\vspace{3mm}

Upon comparing the two aforementioned rigidity principles, a fundamental questions arises.

\vspace{3mm}

\noindent \textbf{Question A.} 
Can the strong Korovkin rigidity property be improved to the Arveson rigidity property in the Korokvin--\v{S}a\v{s}kin rigidity principle?

\vspace{3mm}

 Irrespective of commutativity, the answer was conjectured to be affirmative by Arveson himself in \cite{arveson2011noncommutative}, at least in the separable case:

\vspace{3mm}

\noindent \textbf{Arveson's hyperrigidity conjecture. }
Let $A$ be a unital  $\rC^*$-algebra generated by some separable operator system $S$. Then, the following statements are equivalent.
\begin{enumerate}[\rm (i)]
\item Every irreducible $*$-representation of $A$ is a boundary representation for $S$.
\item The operator system $S$ is hyperrigid in $A$, i.e. every unital $*$-representation for $A$ has the unique extension property with respect to  $S$.
\end{enumerate}

\vspace{3mm}

Despite Arveson's conjecture having been observed and confirmed for many classes of operator systems, it was very recently found that the conjecture fails \cite{bilich2024arveson}, even when $A$ is of type $I$. Whether or not the statement is valid when $A$ is commutative is still unknown; see \cite{davidson2021choquet} and \cite{clouatre2023boundary} for some recent developments in this direction.

In Section \ref{S:Meas}, we show how to amend Arveson's conjecture so as to make it true, based on the following notion. Given a unital $*$-representation $\pi:A\to B(H)$, we say that it has the \emph{unique tight extension property}  with respect to $S$ if  the only unital completely positive map $\psi:A\to \pi(A)''$ agreeing with $\pi$ on $S$ is $\pi$ itself.  (Here and throughout, for a set $X\subset B(H)$ we denote by $X'$ its commutant). This condition is weaker than the aforementioned unique extension property, as we only consider extensions $\psi$ which are ``tight", in the sense that $\psi(A)\subset \pi(A)''$. The usefulness of restricting the range of the extensions to lie in the von Neumann algebra $\pi(A)''$ was also recognized  in \cite{kleski2014korovkin}.

The following is found in Theorem \ref{T:tightHR}, and  is our first main result.

\begin{theoremx}\label{T:A}
Assume that $S$ is separable. Then, the following statements are equivalent.
\begin{enumerate}[{\rm (i)}]
\item Every irreducible $*$-representation of $A$ is a boundary representation for $S$. 
\item Every unital $*$-representation of $A$ has the unique tight extension property with respect to $S$.
\end{enumerate}
\end{theoremx}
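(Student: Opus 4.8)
My plan is to handle the two implications by completely different means. The implication from (ii) to (i) is immediate: if $\pi\colon A\to B(H)$ is irreducible then $\pi(A)''=B(H)$, so the unique tight extension property for $\pi$ is literally the unique extension property, i.e.\ $\pi$ is a boundary representation for $S$; so (ii), specialized to irreducible representations, gives (i).

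For the implication from (i) to (ii), fix a unital $*$-representation $\pi\colon A\to B(H)$ and a unital completely positive map $\psi\colon A\to\pi(A)''$ with $\psi|_S=\pi|_S$; I want $\psi=\pi$. I would first reduce to the separable setting, using that the unique tight extension property is a quasi-equivalence invariant: a $*$-isomorphism $\pi_1(A)''\to\pi_2(A)''$ implementing $\pi_1\mapsto\pi_2$ carries tight extensions of $\pi_1$ to tight extensions of $\pi_2$. Writing $\pi$ as a direct sum of cyclic subrepresentations and regrouping these by quasi-equivalence class, the isotypic pieces are pairwise disjoint, so $\pi(A)''$ is the $\ell^\infty$-direct sum of the von Neumann algebras they generate, the summands being cut out by projections in $\pi(A)''\cap\pi(A)'$; compressing $\psi$ by these central projections splits $\psi$ into a direct sum of tight completely positive maps agreeing with the corresponding parts of $\pi$ on $S$, and each isotypic piece is quasi-equivalent to a single cyclic, hence separably acting, representation. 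So I may assume $A$ and $H$ separable.

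The core is a disintegration, and this is where I would invoke Pedersen's machinery of noncommutative measurable structures. Using the central decomposition of the separably acting $\pi$, write $\pi=\int_X^\oplus\pi_x\,d\mu(x)$ over a standard probability space $(X,\mu)$ with the $\pi_x$ factor representations, $L^\infty(X,\mu)=\pi(A)''\cap\pi(A)'$, and $\pi(A)''=\int_X^\oplus\pi_x(A)''\,d\mu(x)$ the decomposable operators. Since $\psi$ takes values in these decomposable operators and $A$ is separable, I expect $\psi$ to disintegrate as a measurable field $x\mapsto\psi_x$ of unital completely positive maps $\psi_x\colon A\to\pi_x(A)''$ with $\psi=\int_X^\oplus\psi_x\,d\mu(x)$: contractivity, unitality and $n$-positivity of $\psi_x$ are countably many conditions, each testable on a countable dense subset of $A$ (respectively $M_n(A)$) and each holding for $\mu$-almost all $x$ by the corresponding property of $\psi$, so one conull set serves. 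Testing $\psi|_S=\pi|_S$ on a countable dense subset of the separable space $S$ then yields $\psi_x|_S=\pi_x|_S$ for $\mu$-almost every $x$.

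It would then remain to show that every factor representation $\pi_x$ has the unique tight extension property with respect to $S$, whence $\psi_x=\pi_x$ almost everywhere and $\psi=\pi$. When $\pi_x$ is of type~$\mathrm{I}$ this is exactly where (i) is used: $\pi_x$ is an amplification $\dot{\pi}_x\otimes1$ of an irreducible representation $\dot{\pi}_x$, a boundary representation by (i), and since $\pi_x(A)''=B(H_{\dot{\pi}_x})\otimes1$, any unital completely positive $A\to\pi_x(A)''$ agreeing with $\pi_x$ on $S$ has the form $\dot{\psi}_x\otimes1$ with $\dot{\psi}_x$ agreeing with $\dot{\pi}_x$ on $S$, forcing $\dot{\psi}_x=\dot{\pi}_x$ and so $\psi_x=\pi_x$. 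The main obstacle, as I see it, is twofold and intertwined. First, the measurable disintegration of $\psi$ must be carried out carefully enough that the tightness constraint descends fibrewise --- that each $\psi_x$ really lands in $\pi_x(A)''$ and not in some larger algebra attached to the fibre --- which is precisely the point the Bilich--Dor-On counterexample exploits and which appears to demand Pedersen's standard Borel model for the spectrum together with a measurable choice of Stinespring data. Second, one must treat factor representations that are not of type~$\mathrm{I}$, where there is no irreducible representation underneath for (i) to apply to, so that the unique tight extension property there has to be extracted directly from the noncommutative measurable structure of $\pi_x(A)''$ itself.
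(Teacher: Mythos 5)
The implication (ii) $\Rightarrow$ (i) is fine and coincides with the paper's argument. The problem is the converse, where your plan has a genuine gap rather than a complete proof. Your strategy --- central disintegration $\pi=\int_X^\oplus\pi_x\,d\mu(x)$, fibrewise disintegration of $\psi$, and then the unique tight extension property for each factor fibre --- only closes when every $\pi_x$ is of type $\mathrm{I}$, because only then is there an irreducible representation underneath to which hypothesis (i) can be applied. For a non-type-$\mathrm{I}$ factor fibre you explicitly defer the problem (``has to be extracted directly from the noncommutative measurable structure of $\pi_x(A)''$ itself''), but that deferred step \emph{is} the theorem: proving the unique tight extension property for a single factor representation is not any easier than for a general representation, and the factor decomposition cannot be iterated further, so the argument is circular exactly where $A$ fails to be type $\mathrm{I}$. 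What your route would actually establish is the known result of Kleski (\cite{kleski2014korovkin}, Corollary 3.3), which the paper is explicitly improving upon by removing the type $\mathrm{I}$ hypothesis; indeed the authors stress that their techniques avoid direct integration altogether. (The secondary obstacle you flag --- making the tightness constraint and the identity $\psi_x|_S=\pi_x|_S$ descend to the fibres with controlled null sets --- is real but routine by comparison; it is the non-type-$\mathrm{I}$ fibres that sink the approach.)

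For contrast, the paper's proof of (i) $\Rightarrow$ (ii) makes the same first reduction as you (direct sum of cyclic pieces, so $H$ separable), but then works with $\pi$ and $\psi$ globally. The key objects are \emph{measurable splittings}: positive maps $\lambda:\rC^*(\pi(A),t)\to\pi(A)^{**}$ with $\lambda(t)$ universally $\pi$-measurable and $\widehat j\circ\lambda(t)=t$, whose existence for every self-adjoint $t\in\pi(A)''$ is proved (Theorem \ref{T:meassplitexist}) using Pedersen's enveloping Borel algebra $\Bor(\pi(A))$ and a quasicentral approximate unit of $\ker\widehat j\cap\rC^*(\pi(A),b)$ increasing to a central projection. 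Given such a splitting at $t=\psi(a)-\pi(a)\neq 0$, Theorem \ref{T:meastrick} produces, via lower semicontinuity of $\phi\mapsto\widehat\phi(s)$ on the state space and Krein--Milman, a pure state $\omega$ on $\pi(A)$ with $(\widehat\omega\circ\lambda)(t)<0$, and then \cite[Lemma 3.1]{clouatre2018unperforated} converts this into an irreducible representation of $A$ lacking the unique extension property, contradicting (i). No disintegration into factors is needed, which is precisely how the non-type-$\mathrm{I}$ difficulty you ran into is bypassed. To repair your proposal you would either have to supply the missing argument for non-type-$\mathrm{I}$ factor representations (in effect reproving the theorem) or abandon the fibrewise reduction in favour of an argument of this global kind.
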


This result shows how Arveson's original conjecture can be modified in order to avoid the pitfalls from the counterexample of Bilich and Dor On \cite{bilich2024arveson}.  Moreover, it improves on \cite[Corollary 3.3]{kleski2014korovkin}, by removing the requirement therein that $A$ be of type $I$. Notably, our techniques do not require direct integration machinery, and as a whole appear to be completely distinct from that used in \cite{kleski2014korovkin}. 

Indeed, we utilize instead Pedersen's work on noncommutative measurable and Borel structures \cite{pedersen1974},\cite{pedersen1976},\cite[Sections 4.5 and 4.6]{eilers2018c}. Our key technical observation (Theorem \ref{T:meastrick}) relies on the existence of what we call \emph{measurable splittings} for a $*$-representation. The classical theorem of Maharam from measure theory implies that such splittings can always be found in the commutative setting (Proposition \ref{P:Mah}). A crucial step in our argument is the development of a noncommutative version of this fact; see Theorem \ref{T:meassplitexist}.

We now turn to another natural question arising from the previous discussion.

\vspace{3mm}

\noindent  \textbf{Question B.} 
Is the Korokvin--\v{S}a\v{s}kin rigidity principle valid in the noncommutative setting?

\vspace{3mm}

There have been related investigations carried out in the past
\cite{choda1963theorems},\cite{priestley1976noncommutative},\cite{robertson1977korovkin},\cite{uchiyama1999korovkin}, but these have no bearing on our precise question. Our second main result is Corollary \ref{C:nuclearrigprinc}, which yields an affirmative answer in the nuclear setting. More precisely, we prove:

\begin{theoremx}\label{T:B}
Assume that $A$ is nuclear and that $S$ is separable. Then, the following statements are equivalent.
\begin{enumerate}[{\rm (i)}]
\item Every irreducible $*$-representation of $A$ is a boundary representation for $S$.
\item The identity representation of $A$ has the Korovkin rigidity property  with respect to $S$.
\item Every unital $*$-representation of $A$ has the Korovkin rigidity property with respect to $S$.
\end{enumerate}
\end{theoremx}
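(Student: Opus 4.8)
My plan is to run the cycle of implications $\mathrm{(i)}\Rightarrow\mathrm{(iii)}\Rightarrow\mathrm{(ii)}\Rightarrow\mathrm{(i)}$, with nuclearity entering only in the last step. The implication $\mathrm{(iii)}\Rightarrow\mathrm{(ii)}$ is immediate, since the identity representation of $A$ is in particular a unital $*$-representation.

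For $\mathrm{(i)}\Rightarrow\mathrm{(iii)}$ I would first use Theorem~\ref{T:A} to pass from (i) to the statement that every unital $*$-representation of $A$ has the unique tight extension property with respect to $S$. Now fix a unital $*$-representation $\pi\colon A\to B(H)$ and a net $\psi_\lambda\colon A\to\pi(A)$ of unital completely positive maps with $\psi_\lambda(s)\to\pi(s)$ weakly in $\pi(A)$ for every $s\in S$; the goal is the analogous conclusion on all of $A$. The device is to replace $\pi(A)$ by its universal enveloping von Neumann algebra, which coincides with the bidual $\pi(A)^{**}$: letting $\iota\colon\pi(A)\hra\pi(A)^{**}$ be the canonical embedding, the map $\td\pi:=\iota\of\pi$ is a unital $*$-representation of $A$ with $\td\pi(A)''=\pi(A)^{**}$, and the weak topology of $\pi(A)$ is exactly the restriction to $\pi(A)$ of the weak* topology of $\pi(A)^{**}$. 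Since the set of unital completely positive maps from $A$ into $\pi(A)^{**}$ is compact in the point-weak* topology, every subnet of $(\iota\of\psi_\lambda)$ has a further subnet converging point-weak* to a unital completely positive map $\Psi\colon A\to\pi(A)^{**}=\td\pi(A)''$; the hypothesis forces $\Psi|_S=\td\pi|_S$, hence $\Psi=\td\pi$ by the unique tight extension property of $\td\pi$. As this pins down every accumulation point, $\iota\of\psi_\lambda\to\td\pi$ point-weak*, and undoing $\iota$ gives $\psi_\lambda(a)\to\pi(a)$ weakly in $\pi(A)$ for all $a\in A$.

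The substance is in $\mathrm{(ii)}\Rightarrow\mathrm{(i)}$, which I would argue contrapositively. Suppose an irreducible $*$-representation $\sigma\colon A\to B(H_\sigma)$ is not a boundary representation for $S$, so there is a unital completely positive map $\phi\colon A\to B(H_\sigma)$ with $\phi|_S=\sigma|_S$ but $\phi\neq\sigma$. Let $\ol\sigma\colon A^{**}\to B(H_\sigma)$ be the normal extension of $\sigma$. Because $\sigma$ is irreducible, $\ol\sigma$ is surjective and $\ker\ol\sigma=(1-e)A^{**}$ for a central projection $e\in A^{**}$, so $\ol\sigma$ restricts to a normal $*$-isomorphism $eA^{**}\cong B(H_\sigma)$; write $\kappa$ for its inverse, and note $\kappa(\sigma(a))=ea$ for every $a\in A$ (viewing $A\subset A^{**}$). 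I would then set
\[
\Phi\colon A\to A^{**},\qquad \Phi(a):=\kappa(\phi(a))+(1-e)a,
\]
which is unital and completely positive. Using $\kappa(\sigma(s))=es$ one gets $\Phi(s)=es+(1-e)s=s$ for $s\in S$, whereas $\Phi\neq\id_A$ since $\kappa$ is injective and $\phi\neq\sigma$. Thus $\Phi$ is a tight completely positive extension of $\id_A|_S$ into $A^{**}$ distinct from $\id_A$.

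It remains to convert this into a genuine failure of the Korovkin rigidity of the identity representation of $A$, and here nuclearity is essential. I claim that, since $A$ is nuclear, every unital completely positive map $A\to A^{**}$ is a point-weak* limit of a net of unital completely positive maps $A\to A$. Granting the claim, pick such a net $\Phi_\lambda\colon A\to A$ with $\Phi_\lambda\to\Phi$ point-weak*; then $\Phi_\lambda(s)\to\Phi(s)=s$ weakly in $A$ for $s\in S$ (weak convergence in $A$ being the restriction of the weak* topology of $A^{**}$), so by (ii) we would get $\Phi_\lambda(a)\to a$ weakly in $A$ for every $a$, contradicting $\Phi_\lambda(a)\to\Phi(a)$ and $\Phi\neq\id_A$ by uniqueness of weak* limits in $A^{**}$. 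To prove the claim I would use the completely positive approximation property of $A$ to write $\id_A$ as a point-norm limit of maps $\beta_\mu\of\alpha_\mu$ with $\alpha_\mu\colon A\to M_{k_\mu}$, $\beta_\mu\colon M_{k_\mu}\to A$ unital completely positive; then $\Phi\of\beta_\mu\of\alpha_\mu\to\Phi$ in point norm and each $\Phi\of\beta_\mu\of\alpha_\mu$ factors through the matrix algebra $M_{k_\mu}$, so it suffices to approximate any unital completely positive map $M_k\to A^{**}$, in the point-weak* topology, by unital completely positive maps $M_k\to A$. This last step follows from Kaplansky density applied to $M_k(A)\subset M_k(A)^{**}=M_k(A^{**})$ (to obtain a positive net of matrices over $A$ converging weak* to the Choi matrix), followed by a routine perturbation of the diagonal entries to restore exact unitality. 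The one real obstacle is precisely this push-down from the bidual to $A$ itself, carried out without disturbing the values on $S$; the remainder of the argument is a matter of choosing the correct ambient von Neumann algebra and a standard compactness step.
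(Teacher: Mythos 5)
Your cycle (i)$\Rightarrow$(iii)$\Rightarrow$(ii)$\Rightarrow$(i) is structurally sound, and the first leg is essentially the paper's own route: your $\widetilde\pi=\iota\circ\pi$ acting so that $\widetilde\pi(A)''=\pi(A)^{**}$ is exactly the composition $\Upsilon_\pi\circ\pi$ with the universal representation of $\pi(A)$ used in Theorem \ref{T:locref}, and your compactness/cluster-point argument is the implication (i)$\Rightarrow$(ii) proved there; combined with Theorem \ref{T:tightHR} this is precisely how Corollary \ref{C:nuclearrigprinc} obtains (i)$\Rightarrow$(iii), and indeed no nuclearity is needed for it. Your contrapositive treatment of (ii)$\Rightarrow$(i) is a genuinely different packaging: rather than going through Theorem \ref{T:locref} (iii)$\Rightarrow$(i) for $\Upsilon_\sigma\circ\sigma$ and then transferring via Lemma \ref{L:reptrick} and Theorem \ref{T:tightHR}, you manufacture directly from a non-boundary irreducible $\sigma$ a nontrivial unital completely positive map $\Phi:A\to A^{**}$ agreeing with the identity on $S$, using the central projection $e$ with $\ker\overline\sigma=(1-e)A^{**}$; that construction is correct (it is the same central splitting as in the proof of Lemma \ref{L:reptrick}), and it makes the role of nuclearity transparent: everything reduces to pushing $\Phi$ down from $A^{**}$ to $A$ in the point-weak$*$ topology.

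The one genuine gap is in that push-down, at the step you describe as ``Kaplansky density \ldots followed by a routine perturbation of the diagonal entries to restore exact unitality.'' Kaplansky density gives completely positive maps $\theta_\nu:M_k\to A$ whose Choi matrices converge weak$*$ to that of the given unital map, so $\theta_\nu(1)\to 1$ only in the weak topology of $A$: there is no norm control, $1-\theta_\nu(1)$ need not be positive, and $\theta_\nu(1)$ need not be invertible. Consequently no perturbation of the diagonal entries visibly restores exact unitality while preserving complete positivity and the approximation, and the conjugation trick $t\mapsto\theta_\nu(1)^{-1/2}\theta_\nu(t)\theta_\nu(1)^{-1/2}$ is not yet available. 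The standard repair---and the one the paper uses in the proof of Theorem \ref{T:locref}, (iii)$\Rightarrow$(i), following Brown--Ozawa, Proposition 2.3.8---is to note that the finitely many functional estimates you need are convex constraints, while $\theta_\nu(1)\to1$ weakly; passing to convex combinations of a tail (Mazur's theorem) therefore yields a completely positive $\alpha_0:M_k\to A$ satisfying the same estimates together with $\|\alpha_0(1)-1\|<\delta$ in norm, after which $\alpha_0(1)$ is invertible with spectrum in $(1-\delta,1+\delta)$ and the conjugated map is unital completely positive and still approximates, exactly as quantified in \eqref{Eq:delta}--\eqref{Eq:approx}. With that substitution (or by citing Proposition 2.3.8 of Brown--Ozawa outright, which is what the paper does) your argument for (ii)$\Rightarrow$(i), and hence the whole proposal, goes through.
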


Comparing this result with the classical Korokvin--\v{S}a\v{s}kin rigidity principle above, it follows that the 
Korovkin rigidity property for the identity representation is always equivalent to its strong version, at least for commutative $\rC^*$-algebras.  In Section \ref{S:NormK}, we examine the relationship between the two rigidity properties in detail.  We show in Corollary \ref{C:KShomo} that they are in fact equivalent more generally for all homogeneous $\rC^*$-algebras. Along the way, we establish an invariance principle for the strong Korovkin rigidity property under changes of representations (Theorem \ref{T:lift}), relying on a lifting trick of Arveson \cite{arveson1977notes}.  At the time of this writing, we do not know if the two rigidity properties are always equivalent for the identity representation of nuclear $\rC^*$-algebras.

Finally, we close the paper by revisiting, through the lens of our main results, the recent counterexample to the hyperrigidity conjecture  by Bilich and Dor-On. We prove the following (see Theorem \ref{T:BD}).

\begin{theoremx}\label{T:C}
Let $S$ be the Bilich--Dor-On operator system and let $\pi$ be the Bilich--Dor-On representation. Then, the following statements hold.
\begin{enumerate}[\rm (i)]
\item Every unital $*$-representation of $\rC^*(S)$ has the unique tight extension property with respect to $S$.
\item Every unital $*$-representation of $\rC^*(S)$ has the  Korovkin rigidity property with respect to $S$.
\item The representation $\pi$ has the strong Korovkin rigidity property.
\end{enumerate}
\end{theoremx}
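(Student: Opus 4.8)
The plan is to read off all three conclusions from the general theory developed in the paper, once two structural facts about the Bilich--Dor-On construction are imported from \cite{bilich2024arveson}: that \emph{every} irreducible $*$-representation of $A:=\rC^*(S)$ is a boundary representation for $S$, and that $A$ is of type $I$. The first of these is precisely what makes their construction a counterexample to Arveson's conjecture — it holds while $S$ is simultaneously shown not to be hyperrigid in $A$, with $\pi$ exhibiting the failure as a unital $*$-representation lacking the unique extension property. Granting it, statement (i) is immediate from Theorem \ref{T:A}, since $S$ is separable. For statement (ii), one adds that $\rC^*$-algebras of type $I$ are nuclear, so that Theorem \ref{T:B} applies and shows that every unital $*$-representation of $A$ — and in particular $\pi$ — enjoys the Korovkin rigidity property.

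The real content is statement (iii), because the strong Korovkin rigidity property for a fixed representation does not follow formally from its weak version. Indeed, a soft compactness argument applied to a net $\psi_\lambda\colon A\to\pi(A)$ with $\psi_\lambda\to\pi$ in norm on $S$ only yields a subnet along which $\psi_\lambda$ converges, in the appropriate weak topology, to a unital completely positive map $\psi\colon A\to\pi(A)''$; by the unique tight extension property from (i) this forces $\psi=\pi$, which recovers (ii) but provides no norm convergence on all of $A$. To obtain (iii) I would instead lean on the special structure of the Bilich--Dor-On algebra, which is subhomogeneous with irreducible representations of uniformly bounded dimension. If $A$ is in fact homogeneous, then Corollary \ref{C:KShomo} identifies the Korovkin and strong Korovkin rigidity properties outright and (iii) drops out of (ii); in general, the strategy is to first establish strong Korovkin rigidity for a conveniently chosen representation — the identity representation, or a direct integral of the boundary representations — using homogeneity together with the tools developed in Section \ref{S:NormK}, and then to transport the property to the particular representation $\pi$ via the invariance principle of Theorem \ref{T:lift}.

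I expect the only genuinely delicate step to be this last one: matching the structural data of the Bilich--Dor-On example — its (sub)homogeneity, the explicit description of $\pi$, and the way $\pi$ is related, through a subrepresentation or weak containment, to a representation for which strong rigidity is already known — against the precise hypotheses of Corollary \ref{C:KShomo} and Theorem \ref{T:lift}. Steps (i) and (ii), by contrast, amount to straightforward applications of Theorems \ref{T:A} and \ref{T:B} once the two imported facts about $A$ are in place, so that no new ideas are needed there.
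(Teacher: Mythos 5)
Your treatment of (i) and (ii) is essentially the paper's: separability of $S$ together with the imported fact that every irreducible $*$-representation of $A=\rC^*(S)$ is a boundary representation gives (i) via Theorem \ref{T:A}, and nuclearity of $A$ (which the paper reads off from the extension $0\to\fK\to A\to A/\fK\to 0$ with commutative quotient, and which you obtain from type I-ness) gives (ii) via Theorem \ref{T:B}. No issue there.

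The gap is in (iii), and it starts with a false structural claim: the Bilich--Dor-On algebra is not subhomogeneous, let alone homogeneous. It contains the ideal $\fK$ of all compact operators on the separable infinite-dimensional Hilbert space $H$, so the identity representation is an infinite-dimensional irreducible $*$-representation; hence neither Corollary \ref{C:KShomo} nor Theorem \ref{T:normhomo} can be applied to $A$ itself or to its identity representation. This removes the starting point of your fallback plan: you propose to first establish strong Korovkin rigidity for the identity representation (or a direct integral of boundary representations) ``using homogeneity'' and then transport it to $\pi$ via Theorem \ref{T:lift}, but no result in Section \ref{S:NormK} yields strong Korovkin rigidity for the identity representation of this $A$ --- indeed, whether Korovkin rigidity of the identity representation of a nuclear algebra implies its strong variant is precisely the question the paper leaves open at the end of that section. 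What actually makes (iii) work is homogeneity of the \emph{range} of $\pi$, not of $A$: since $\fK\subset\ker\pi$ and $A/\fK$ is commutative, $\pi(A)$ is a commutative, hence $1$-homogeneous, $\rC^*$-algebra, and Corollary \ref{C:homogrange} (whose hypotheses are that $\pi(A)$ is homogeneous and that all irreducible $*$-representations of $A$ are boundary representations for $S$) then gives the strong Korovkin rigidity of $\pi$ directly. The missing observation is simply that $\pi$ annihilates the compacts and therefore has commutative range; with it, (iii) is immediate and no transport argument is needed.
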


\section{Rigidity and noncommutative measurability}\label{S:Meas}

As discussed in the introduction, Arveson's hyperrigidity conjecture is now known to be false, thanks to \cite{bilich2024arveson}. The aim of this section is to show that there is a natural modification of the original conjecture that makes it true. The crucial idea behind the developments below is to consider noncommutative measurable structures.

\subsection{Uniqueness of tight extensions}\label{SS:msue}
Let $A$ be a unital $\rC^*$-algebra and let $\pi:A\to B(H)$ be a unital $*$-representation. As is well-known, $\pi(A)^{**}$ is a von Neumann algebra.  Throughout the paper,  given another von Neumann algebra $M$ and a bounded linear map $\phi:\pi(A)\to M$, we will denote by $\widehat\phi:\pi(A)^{**}\rightarrow M$ the unique weak-$*$ continuous extension of $\varphi$.

Let  $t\in \pi(A)^{**}$ be a self-adjoint element. Following \cite[Paragraph 4.3.11]{eilers2018c}, we say that $t$ is \emph{universally $\pi$-measurable} if for every $\eps>0$ and every state $\phi$ on $\pi(A)$, there are self-adjoint elements $s_1,s_2\in \pi(A)^{**}$ such that $-s_1\leq t\leq s_2$, $\widehat\phi(s_1+s_2)<\eps$, and both $s_1$ and $s_2$ can be written as  increasing limits in the weak-$*$ topology  of $\pi(A)^{**}$ of self-adjoint nets from $\pi(A)$.

We also require the following notion.  
Let $t\in \pi(A)''$ be a self-adjoint  element.
We say that $\pi$ admits a \emph{measurable splitting at $t$} if there is a positive linear map
$\lambda:\rC^*(\pi(A),t)\to \pi(A)^{**}$ with the following properties.
\begin{enumerate}
\item[{\rm (s1)}] The element  $\lambda(t)$ is universally $\pi$-measurable.
\item[{\rm (s2)}] If $j:\pi(A)\to\pi(A)''$ denotes the inclusion and $\widehat j:\pi(A)^{**}\to \pi(A)''$ is its canonical extension, then $\widehat j\circ \lambda(t)=t$. 
\item[{\rm (s3)}] If $\omega$ is a pure state on $\pi(A)$ such that $\widehat\omega\circ \lambda\neq 0$, then $\widehat\omega\circ \lambda|_{\pi(A)}$ is a pure state as well.
\end{enumerate}

The fundamental lever that will allow us to successfully tweak Arveson's conjecture is the following result.

\begin{theorem}\label{T:meastrick}
Let $A$ be a unital $\rC^*$-algebra and let $S\subset A$ be an operator system such that $A=\rC^*(S)$. Let $\pi:A\to B(H)$ be a unital $*$-representation  and let $\psi:A\to B(H)$ be a unital completely positive map agreeing with $\pi$ on $S$. Let $a\in A$ be a self-adjoint element such that $\psi(a)$ lies in $\pi(A)''$ and $\pi$ admits a measurable splitting  at $\psi(a)-\pi(a)$. If  every irreducible $*$-representation of $A$ is a boundary representation for $S$, then  $\psi(a)=\pi(a)$.
\end{theorem}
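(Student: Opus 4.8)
The plan is to push the statement, via the measurable splitting, first into $\pi(A)^{**}$ and then down to the pure states of $\pi(A)$, where the boundary‐representation hypothesis can be applied through the unique extension property.

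\textbf{Setup and first reduction.} Put $t:=\psi(a)-\pi(a)$, a self-adjoint element of $\pi(A)''$, and let $\lambda:\rC^*(\pi(A),t)\to\pi(A)^{**}$ be a measurable splitting at $t$, with $j,\widehat j$ as in (s2). By (s2) we have $\widehat j(\lambda(t))=t$, so it suffices to prove $\lambda(t)=0$ in $\pi(A)^{**}$. The element $\lambda(t)$ is self-adjoint and universally $\pi$-measurable by (s1). I will show $\widehat\omega(\lambda(t))=0$ for every pure state $\omega$ of $\pi(A)$, and then use Pedersen's machinery to deduce $\lambda(t)=0$.

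\textbf{Vanishing at pure states.} Fix a pure state $\omega$ on $\pi(A)$. If $\widehat\omega\circ\lambda=0$ there is nothing to do, so assume $\widehat\omega\circ\lambda\neq 0$. Then $\tau:=\widehat\omega\circ\lambda$ is positive on $\rC^*(\pi(A),t)$, and by (s3) the restriction $\omega':=\tau|_{\pi(A)}$ is a pure state of $\pi(A)$; in particular $\tau(1)=1$, so $\tau$ is a state. Since $\pi$ is a surjective $*$-homomorphism, $\omega'\circ\pi$ is a pure state of $A$, so its GNS representation $\sigma:A\to B(K)$ is irreducible, hence by hypothesis a boundary representation for $S$, and therefore enjoys the unique extension property. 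Extend $\tau$ by Hahn--Banach to a state $\widetilde\tau$ on $B(H)$ and let $(\rho,L,\xi)$ be its GNS triple. The subspace $K_\sigma:=\overline{\rho(\pi(A))\xi}$ is invariant under the $*$-algebra $\rho(\pi(A))$ and, via $x\mapsto\rho(\pi(x))|_{K_\sigma}$, identifies with $\sigma$, because $\langle\rho(\pi(x))\xi,\xi\rangle=(\omega'\circ\pi)(x)$. Now compress the unital completely positive map $\rho\circ\psi:A\to B(L)$ to $K_\sigma$ to obtain a unital completely positive map $\Phi:A\to B(K_\sigma)$. For $s\in S$, using $\psi(s)=\pi(s)$ together with the $\rho(\pi(A))$-invariance of $K_\sigma$, we get $\Phi(s)=P_{K_\sigma}\rho(\pi(s))|_{K_\sigma}=\rho(\pi(s))|_{K_\sigma}=\sigma(s)$. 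Thus $\Phi$ agrees with $\sigma$ on $S$, so $\Phi=\sigma$ by the unique extension property; in particular $\Phi(a)=\sigma(a)=P_{K_\sigma}\rho(\pi(a))|_{K_\sigma}$. Hence $P_{K_\sigma}\rho(t)P_{K_\sigma}=P_{K_\sigma}\rho(\psi(a)-\pi(a))P_{K_\sigma}=\Phi(a)-\sigma(a)=0$, and evaluating at $\xi\in K_\sigma$ gives $\widehat\omega(\lambda(t))=\tau(t)=\widetilde\tau(t)=\langle\rho(t)\xi,\xi\rangle=0$.

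\textbf{The main obstacle: from pure states back to $0$.} It remains to show that a universally $\pi$-measurable self-adjoint $u\in\pi(A)^{**}$ with $\widehat\omega(u)=0$ for all pure states $\omega$ of $\pi(A)$ must vanish. This is the crux, and it is exactly where Pedersen's theory of noncommutative measurable structures (developed in the works cited above) is indispensable: the squeeze $-s_1\le u\le s_2$ by increasing weak-$*$ limits of self-adjoint nets from $\pi(A)$, available with $\widehat\phi(s_1+s_2)$ arbitrarily small for each state $\phi$, says that $\phi\mapsto\widehat\phi(u)$ is, in the appropriate sense, a bounded Baire-type function on the state space; combining this with a barycentric representation of an arbitrary state of $\pi(A)$ as an average of pure states, one propagates the vanishing from the extreme boundary to every state $\phi$, and since states separate $\pi(A)^{**}$ this forces $u=0$. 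I expect this last step to demand the most care; the two reductions above are, by contrast, routine manipulations with the GNS construction and the unique extension property.
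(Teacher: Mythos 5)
Your reduction and your pure-state step are fine: the GNS-compression argument showing $\widehat\omega(\lambda(t))=0$ for every pure state $\omega$ of $\pi(A)$ is correct, and in effect it reproves, in the special case needed here, the lemma that the paper simply cites at the end of its proof (\cite[Lemma 3.1]{clouatre2018unperforated}); the paper instead runs the contrapositive, producing from a hypothetical nonzero $t$ a state on $\rC^*(\psi(A))$ with pure restriction to $\pi(A)$ that is negative on $t$. The genuine gap is the step you yourself flag as the crux: that a universally $\pi$-measurable self-adjoint $u\in\pi(A)^{**}$ with $\widehat\omega(u)=0$ for all pure states $\omega$ of $\pi(A)$ must vanish. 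This is true but it is a real theorem of Pedersen's theory (positivity of universally measurable elements is detected by pure states), and your sketch of it does not work as written. A state of $\pi(A)$ need not be the barycenter of a measure actually supported on the pure states: when the state space is nonmetrizable, maximal representing measures are only pseudo-supported on the pure state space, and the bounded affine function $\phi\mapsto\widehat\phi(u)$ is not a priori measurable for, or correctly integrated by, such measures. Without measurability the propagation is simply false: any nonzero element under the central projection complementary to the atomic part of $\pi(A)^{**}$ is annihilated by every pure state. So ``averaging over pure states'' is precisely the point where the quantitative content of universal measurability must enter, and your proposal never does this.

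The way the paper closes this gap is more elementary and is the heart of its proof: given the state $\sigma$ at which one wants to evaluate, universal measurability yields $s_1,s_2$ with $-s_1\leq u\leq s_2$ and $\widehat\sigma(s_1+s_2)<\eps$, where $s=s_2$ is an increasing weak-$*$ limit of self-adjoint elements of $\pi(A)$ and hence $\widehat\sigma(s)\leq\widehat\sigma(u)+\eps$. The function $F(\phi)=\widehat\phi(s)$ on the state space of $\pi(A)$ is then affine and lower semicontinuous, so it attains its minimum and the set of minimizers is a nonempty closed face; by Krein--Milman this face contains a pure state $\omega$, and the chain $\widehat\omega(u)\leq F(\omega)\leq F(\sigma)\leq\widehat\sigma(u)+\eps$ transfers the vanishing at pure states back to $\sigma$ (the paper applies this with $\sigma=\tau|_{\pi(A)}$ for a normal state $\tau$ on $\pi(A)''$ with $\tau(t)<0$, using (s2) to identify $\widehat\sigma(\lambda(t))$ with $\tau(t)$). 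If you complete your write-up either by this envelope/Krein--Milman argument (applied to $\lambda(t)$ and $-\lambda(t)$) or by an explicit citation of the corresponding result in \cite{eilers2018c}, your route becomes a correct, mildly reorganized version of the paper's proof; as it stands, the decisive step is asserted rather than proved, and the proposed mechanism for it is not viable in general.
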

\begin{proof}

Put $t=\psi(a)-\pi(a)$ and assume that $t\neq 0$. Upon replacing $t$ by $-t$ if necessary, we may find a weak-$*$ continuous state $\tau$ on $\pi(A)''$ such that $\tau(t)<0$. 

Let $\lambda:\rC^*(\pi(A),t)\to \pi(A)^{**}$ be a measurable splitting for $\pi$ at $t$.
Let $\sigma=\tau|_{\pi(A)}$. Then, we have $\tau\circ \widehat j=\widehat\sigma$ so that $\widehat\sigma(\lambda(t))=\tau(t)<0$ by property (s2). On the other hand, property (s1) yields that $\lambda(t)$ is universally $\pi$-measurable.  Therefore, there is  an increasing net $(\pi(a_i))$ of self-adjoint elements in $\pi(A)$ that converges in the weak-$*$ topology of $\pi(A)^{**}$  to some element $s$ with $\lambda(t)\leq s$ and $\widehat\sigma(s)<0$.

Let $K$ denote the state space of the unital $\rC^*$-algebra $\pi(A)$, equipped with the weak-$*$ topology. Define a function $F:K\to \bR$ by
\[
F(\phi)=\widehat\phi(s),\quad \phi\in K.
\]
Note that $F$ is affine and lower semi-continuous, since it is the increasing pointwise limit of the evaluation maps at $\pi(a_i)$. Hence, $F$ attains its minimum value $\mu\in \bR$, and the set
\[L=\{\phi\in K:F(\phi)=\mu\}\] is a non-empty, weak-$*$ closed face of $K$.
By the Krein--Milman theorem, there is a pure state $\omega$ on $\pi(A)$ with the property that $F(\omega)=\mu$. 
Recall now that $\lambda(t)\leq s$, so
\begin{equation}\label{Eq:omegat}
(\widehat\omega \circ \lambda)(t)\leq F(\omega)\leq  F(\sigma)=\widehat\sigma(s)<0.
\end{equation}
By property (s3), we see that $\widehat\omega \circ \lambda$ restricts to a pure state on $\pi(A)$. 

Finally, note that $\rC^*(\pi(A),t)\subset \rC^*(\psi(A))$; see \cite[Equation (3.1)]{clouatre2018unperforated}. Let $\theta$ be any state on $\rC^*(\psi(A))$ extending $\widehat\omega\circ \lambda$. Then, $\theta(\psi(a)-\pi(a))=(\widehat\omega \circ \lambda)(t)<0$ by \eqref{Eq:omegat} and $\theta|_{\pi(A)}$ is pure.
Invoking \cite[Lemma 3.1]{clouatre2018unperforated}, we may find an irreducible $*$-representation of $A$ that does not have the unique extension property with respect to $S$.
\end{proof}

We are particularly interested in applying the previous result to representations $\pi$ admitting a measurable splitting at $\psi(a)-\pi(a)$ for every self-adjoint $a\in A$. Indeed, in this case Theorem \ref{T:meastrick} implies that the extension $\psi$ agrees with $\pi$ everywhere.  Therefore, our next task is to find sufficient conditions for a representation to enjoy this property. This will be accomplished by adapting a classical tool from measure theory, which we state below to better frame our results.

\begin{proposition}\label{P:Mah}
Let $A$ be a separable, commutative, unital $\rC^*$-algebra and let $\pi:A\to B(H)$ be a cyclic unital $*$-representation. 
Then, $\pi$ admits a measurable splitting at every self-adjoint $t\in \pi(A)''$.
\end{proposition}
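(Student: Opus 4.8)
The plan is to pass to the standard measure-theoretic model and then lift. Since $\pi(A)$ is a commutative unital $\rC^*$-algebra, write $\pi(A)=\rC(X)$ with $X$ its (compact Hausdorff) spectrum, and identify $\pi(A)^{**}$ with the enveloping von Neumann algebra $\rC(X)^{**}$ realized on the universal representation space $H_u=\bigoplus_\nu L^2(\nu)$, the sum running over the states $\nu$ of $\pi(A)$, i.e. the Radon probability measures on $X$. Cyclicity of $\pi$ lets us identify $\pi$ on $\pi(A)$ with the multiplication representation of $\rC(X)$ on $L^2(\mu)$ for a suitable Radon probability measure $\mu$, so that $\pi(A)''=L^\infty(\mu)$ (in particular $\pi(A)''$ is abelian). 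Every bounded Borel function $h$ on $X$ acts by multiplication on each $L^2(\nu)$, and — approximating indicators of open sets from below by continuous functions and then using the monotone class theorem — the resulting operator lies in $\rC(X)^{**}$; call it $\beta(h)$. Then $\beta\colon \mathcal B_b(X)\to\pi(A)^{**}$ (bounded Borel functions to the bidual) is a unital $*$-homomorphism extending the canonical embedding of $\pi(A)$, and it carries bounded pointwise-monotone limits to monotone weak-$*$ limits. A routine monotone class argument, using that both sides agree on $\rC(X)$ and are $\sigma$-normal in $h$, then yields the two compatibilities I will need: $\widehat\nu\circ\beta(h)=\int h\,d\nu$ for every state $\nu$ of $\pi(A)$, and $\widehat j\circ\beta(h)=[h]_\mu$ in $L^\infty(\mu)$, where $j\colon\pi(A)\to\pi(A)''$ is the inclusion.

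Next I would invoke the classical lifting theorem — which rests on Maharam's structure theory of measure algebras — to obtain a lifting $\rho\colon L^\infty(\mu)\to\mathcal B_b(X)$: a unital positive $*$-homomorphism with $[\rho(f)]_\mu=f$ for all $f$. Since $\pi(A)''$ is abelian and contains both $\pi(A)$ and $t$, the $\rC^*$-algebra $N:=\rC^*(\pi(A),t)$ is a commutative unital subalgebra of $L^\infty(\mu)$, and I set
\[
\lambda:=\beta\circ(\rho|_N)\colon N=\rC^*(\pi(A),t)\longrightarrow\pi(A)^{**},
\]
a unital $*$-homomorphism, hence in particular a positive unital linear map. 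It remains to verify (s1)--(s3).

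Property (s2) is immediate: $\widehat j\circ\lambda(t)=\widehat j\circ\beta(\rho(t))=[\rho(t)]_\mu=t$. For (s3), a pure state $\omega$ of $\pi(A)$ is evaluation $\delta_x$ at some $x\in X$, and for $u\in N$ one computes $\widehat\omega\circ\lambda(u)=\widehat{\delta_x}\circ\beta(\rho(u))=(\rho(u))(x)$ using the first compatibility with $\nu=\delta_x$; hence $\widehat\omega\circ\lambda=\mathrm{ev}_x\circ(\rho|_N)$ is a composition of $*$-homomorphisms, so it is a character of $N$, and its restriction to $\pi(A)\subseteq N$ is a character of $\pi(A)$, i.e. a pure state (the hypothesis $\widehat\omega\circ\lambda\neq0$ is automatic, as $\lambda$ is unital). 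For (s1), put $g:=\rho(t)$, a bounded real Borel function; fix $\eps>0$ and a state $\phi$ of $\pi(A)$, i.e. a Radon probability measure $\nu$ on $X$. By the Vitali--Carath\'eodory theorem there are bounded functions $w\le g\le v$ with $w$ upper semicontinuous, $v$ lower semicontinuous, and $\int(v-w)\,d\nu<\eps$. Setting $s_2:=\beta(v)$ and $s_1:=\beta(-w)$ gives self-adjoint elements of $\pi(A)^{**}$ with $-s_1=\beta(w)\le\beta(g)=\lambda(t)\le\beta(v)=s_2$; since $v$ and $-w$ are bounded and lower semicontinuous, each is the pointwise supremum of an upward-directed net of continuous functions on $X$, so $s_1$ and $s_2$ are increasing weak-$*$ limits of self-adjoint nets from $\pi(A)$, while $\widehat\phi(s_1+s_2)=\widehat\nu\circ\beta(v-w)=\int(v-w)\,d\nu<\eps$. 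Thus $\lambda(t)$ is universally $\pi$-measurable, which is (s1).

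The main obstacle is not any single estimate but the bookkeeping concentrated in the first two paragraphs: identifying the measure-theoretic data ($\mu$, Borel functions, the lifting $\rho$) with their images inside the abstract bidual $\pi(A)^{**}$, checking that the embedded lifted elements are genuinely universally $\pi$-measurable, and pinning down the compatibility of $\beta$ with the canonical maps $\widehat j$ and $\widehat\omega$. This is exactly the arena of Pedersen's theory of noncommutative Borel/measurable structures recalled in the cited references, which is what makes the passage routine; one should also take mild care that the codomain of the lifting consists of functions measurable enough to act across the whole universal representation.
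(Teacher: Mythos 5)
Your argument is correct and follows essentially the same route as the paper's proof: realize $\pi(A)\cong\rC(X)$ acting by multiplication on $L^2(\mu)$, use the classical Maharam (Ionescu Tulcea) lifting theorem to split $\widehat{j}$ through the bounded Borel functions regarded as a subalgebra of $\pi(A)^{**}$, and verify (s1)--(s3) exactly as you do. The only difference is that where the paper simply cites Pedersen's theory for the embedding $\Bor(X)\subset\pi(A)^{**}$ and for the universal $\pi$-measurability of its self-adjoint elements, you re-derive these facts by hand (monotone class argument and Vitali--Carath\'eodory), and you correctly flag the same point the paper glosses over, namely that the lifting must be taken with Borel (not merely $\mu$-measurable) values so that its image acts on the whole universal representation.
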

\begin{proof}
There is a compact Hausdorff space $X$ for which $\pi(A)\cong \rC(X)$. Since $\pi$ is cyclic, up to unitary equivalence, we may assume that $H=L^2(X,\mu)$ for some regular Borel probability measure $\mu$, and that $\pi(A)$ acts via  multiplication operators. In this case $\pi(A)''$ is identified with  $L^\infty(X,\mu)$, once again via multiplication operators.

Consider the inclusion $j: \pi(A)\to \pi(A)''$ and its canonical weak-$*$ continuous extension $\widehat j:\pi(A)^{**}\to \pi(A)''$. Let $\Bor(X)\subset \pi(A)^{**}$ be the $\rC^*$-subalgebra of bounded Borel measurable functions on $X$. As explained in \cite[Paragraph 4.3.16]{eilers2018c}, every self-adjoint element in $\Bor(X)$ is universally $\pi$-measurable. Now, the restriction of $\widehat j$ to $\Bor(X)$ is a surjective $*$-homomorphism onto $\pi(A)''$ that maps a function in $\rC(X)$ to its $[\mu]$-equivalence class in $L^\infty(X,\mu)$. 

Finally, fix a self-adjoint element $t\in \pi(A)''$. It is a standard argument (see for instance \cite[Corollary 2, page 91]{arveson2012invitation}) that there is a $*$-homomorphism $\lambda:\rC^*(\pi(A),t)\to \Bor(X)$ such that $\widehat j \circ \lambda=\id$.  We claim that $\lambda$ is a measurable splitting for $\pi$ at $t$. Indeed, it is clear that properties (s1) and (s2) are satisfied by construction of $\lambda$.
Let $\omega$ be a pure state on $\pi(A)$ such that $\widehat\omega\circ \lambda\neq 0$. In particular,  $\widehat\omega(\lambda(1))\neq 0$. Since $\pi(A)$ is commutative, this means that $\omega$ is a character, and thus so is $\widehat\omega\circ \lambda|_{\pi(A)}$. In other words, $\widehat\omega\circ \lambda|_{\pi(A)}$ is a pure state and property (s3) holds.
\end{proof}

Motivated by the argument above, we are thus looking to establish a suitable noncommutative analogue of Maharam's theorem. Classically, the requirement for this result to hold for $L^\infty(X,\mu)$ is that the probability space $(X,\mu)$ be a so-called ``standard Borel space". It stands to reason, then, that Pedersen's work on noncommutative Borel structures \cite{pedersen1974},\cite{pedersen1976} should be useful in our endeavour. Let us be more precise. 

Let $A$ be a $\rC^*$-algebra. Its \emph{enveloping Borel algebra} is the $\rC^*$-subalgebra $\Bor(A)\subset A^{**}$ whose self-adjoint part coincides with the  monotone weak-$*$ sequential closure of the self-adjoint part of $A$; see \cite[Section 4.5]{eilers2018c} for details. An important fact that we require below is that the self-adjoint elements in $\Bor(A)$ are universally $\pi$-measurable, where $\pi$ the identity representation of $A$   \cite[Lemma 4.5.12]{eilers2018c}. We now arrive at our noncommutative extension of Proposition \ref{P:Mah}.

\begin{theorem}\label{T:meassplitexist}
Let $A$ be a separable unital $\rC^*$-algebra, let $H$ be a separable Hilbert space  and let $\pi:A\to B(H)$ be a unital $*$-representation. Then, $\pi$ admits a measurable splitting at every self-adjoint $t\in \pi(A)''$.
\end{theorem}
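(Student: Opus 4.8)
The goal is a noncommutative Maharam-type lifting: given a separable unital $\rC^*$-algebra $A$ acting on a separable Hilbert space via $\pi$, and a fixed self-adjoint $t\in\pi(A)''$, produce a positive map $\lambda:\rC^*(\pi(A),t)\to\pi(A)^{**}$ satisfying (s1)--(s3). The natural target to land in is Pedersen's enveloping Borel algebra $\Bor(\rC^*(\pi(A),t))\subset \rC^*(\pi(A),t)^{**}$, whose self-adjoint elements are automatically universally $\pi$-measurable by \cite[Lemma 4.5.12]{eilers2018c}. Since $\Bor(B)$ for a separable $B$ is the noncommutative analogue of the bounded Borel functions on a standard Borel space, the first step is to recall Pedersen's theorem that for a separable $\rC^*$-algebra $B$ and any representation $\rho:B\to B(H)$ with $H$ separable, the canonical extension $\widehat\rho:B^{**}\to\rho(B)''$ restricts to a surjective normal $*$-homomorphism $\Bor(B)\to\rho(B)''$, and moreover this map admits a $*$-homomorphic cross-section (this is the content of \cite{pedersen1974},\cite{pedersen1976}; see \cite[Section 4.5--4.6]{eilers2018c}). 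Applying this with $B=\rC^*(\pi(A),t)$ and $\rho$ the identity representation of $B$ on $H$ (note $\rho(B)''=\pi(A)''$ since $t\in\pi(A)''$) produces a unital $*$-homomorphism $\lambda:\pi(A)''\to\Bor(B)\subset B^{**}$ with $\widehat{\iota}\circ\lambda=\id_{\pi(A)''}$, where $\iota:B\hookrightarrow\pi(A)''$. Composing with the inclusion $\rC^*(\pi(A),t)\subset\pi(A)''$ gives the desired $\lambda$ on $\rC^*(\pi(A),t)$; then (s1) holds because $\lambda(t)\in\Bor(B)$ is universally $\pi$-measurable, and (s2) holds because $\widehat j\circ\lambda$ is a normal extension of $\id$ on the generators.

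The genuinely new part is property (s3): if $\omega$ is a pure state on $\pi(A)$ with $\widehat\omega\circ\lambda\neq 0$, then $\widehat\omega\circ\lambda|_{\pi(A)}$ is pure. Here I would exploit two facts. First, purity is governed by GNS: $\widehat\omega$ is a normal state on $\pi(A)''$, hence vector-induced, $\widehat\omega(\cdot)=\langle(\cdot)\xi,\xi\rangle$ for a unit vector $\xi\in H$ lying in the (one-dimensional, since $\omega$ is pure and $\pi(A)$ acts irreducibly on the GNS space — but here we must be careful, $\pi$ need not be irreducible) ... more precisely, $\omega$ pure on $\pi(A)$ forces the cyclic subspace $\overline{\pi(A)\xi}$ to carry an irreducible representation $\pi_\xi$ of $\pi(A)$. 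Second, and this is the crux, $\widehat\omega\circ\lambda$ is a state on $B^{**}$ whose restriction to $B=\rC^*(\pi(A),t)$ I must show is pure. The mechanism is that $\lambda$, being a $*$-homomorphism onto a subalgebra of $\Bor(B)$ splitting the quotient $\Bor(B)\to\pi(A)''$, intertwines the irreducible GNS representation of $\pi(A)$ at $\omega$ with an irreducible representation of $B$: concretely, the normal state $\widehat\omega$ on $\pi(A)''=\widehat\iota(\Bor(B))$ pulls back along the $*$-homomorphism $\lambda$ to a state on $\Bor(B)$, and one checks its restriction to $B$ is pure because the GNS representation it induces is exactly $\pi_\xi\circ(\text{the }\rC^*\text{-map }B\to\pi(A)''\to B(\overline{\pi(A)\xi}))$, which is irreducible since $\pi_\xi$ is, while $t$ maps into the weak closure. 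The key lemma I would isolate is: if $\rho:B\to B(\H)$ is an irreducible representation of a $\rC^*$-algebra and $\lambda:\rho(B)''\to B^{**}$ is a $*$-homomorphic lift of the canonical surjection, then for any unit vector $\eta\in\H$ the pulled-back vector state $\langle\lambda(\cdot)\eta,\eta\rangle$ is pure on $B$ — and this should follow because $\rho$ extends normally to $\rho(B)''=B(\H)$ and $\lambda$ composed with that extension recovers $\rho$ on $B$ (since $\widehat\iota\circ\lambda=\id$ and $\rho=\widehat\rho\circ\iota$), so the GNS triple of the pulled-back state is unitarily $\rho$ itself.

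The main obstacle I anticipate is precisely verifying (s3) with full rigor when $\pi$ is not irreducible, because then a pure state $\omega$ on $\pi(A)$ need not come from an irreducible summand of $\pi$ in any naive way — it lives in $\pi(A)^*$, not in the predual of $\pi(A)''$ a priori. I would handle this by first reducing to the cyclic (indeed irreducible) case: given pure $\omega$ on $\pi(A)$ with $\widehat\omega\circ\lambda\neq 0$, pass to the GNS representation $(\pi_\omega,\H_\omega,\xi_\omega)$, note $\pi_\omega$ is irreducible, and observe that $\lambda$ descends to a $*$-homomorphic splitting of the canonical map $\pi_\omega(\pi(A))'' = B(\H_\omega)\leftarrow \rC^*(\pi_\omega\pi(A),\,\pi_\omega(t))^{**}$ — wait, this requires $\pi_\omega$ to extend to $\rC^*(\pi(A),t)$, which it does since that algebra is generated by $\pi(A)$ and a single operator $t$ in $\pi(A)''$, and normality lets $\pi_\omega$ (viewed on $\pi(A)''$) act on $t$. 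Then apply the key lemma above to $\rho=\pi_\omega$. The bookkeeping of which algebra acts where is the delicate part; the functional-analytic content reduces to Pedersen's existence theorem plus the observation that $\rC^*$-homomorphic lifts transport purity, which is exactly why properties (s1)--(s3) were formulated the way they were. I would write out the reduction to the irreducible case carefully and cite Pedersen \cite{pedersen1974},\cite{pedersen1976} and \cite[Sections 4.5--4.6]{eilers2018c} for the existence of the Borel lift, keeping the rest at the level of the sketch above.
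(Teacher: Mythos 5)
Your proposal hinges on a claim that is not available: that Pedersen's theory provides, for a separable $\rC^*$-algebra $B$ represented on a separable Hilbert space, a \emph{$*$-homomorphic cross-section} of the canonical surjection $\Bor(B)\to \rho(B)''$. What Pedersen's work (and \cite[Section 4.5]{eilers2018c}) gives is only the \emph{surjectivity} of this map (e.g. \cite[Corollary 4.5.10]{eilers2018c}); a multiplicative lifting is precisely the noncommutative Maharam statement whose absence is the whole difficulty here, and no such general cross-section theorem is in \cite{pedersen1974}, \cite{pedersen1976} or the book. The paper does not produce a lifting of all of $\pi(A)''$ at all: it first lifts the single element $t$ to some $b\in\Bor(\pi(A))$ (this is where Pedersen's surjectivity is used), sets $B=\rC^*(\pi(A),b)\subset\Bor(\pi(A))$ and $D=\rC^*(\pi(A),t)$, takes a quasicentral approximate unit $(f_n)$ of the separable ideal $\ker\widehat j\cap B$ relative to $B$, observes that its weak-$*$ limit is a projection $e\in\Bor(\pi(A))$ which is central in $\pi(A)^{**}$, and then defines $\lambda$ as the inverse of the $*$-isomorphism $\widehat j|_{B(1-e)}:B(1-e)\to D$. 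So the splitting exists only on the algebra generated by $\pi(A)$ and the one element $t$, and its construction is an ad hoc cut-down by a central projection, not an instance of a general lifting theorem. By assuming the global homomorphic lift, your argument assumes (a strengthening of) the statement to be proved.

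Two further points would still need repair even granting a lift. First, your $\lambda$ lands in $\Bor(\rC^*(\pi(A),t))\subset \rC^*(\pi(A),t)^{**}$, whereas a measurable splitting must take values in $\pi(A)^{**}$, and property (s1) asks for universal \emph{$\pi$-}measurability: the approximating monotone nets must come from $\pi(A)$ and the smallness is tested against states of $\pi(A)$. Membership in the Borel envelope of the larger algebra $\rC^*(\pi(A),t)$ does not give this; the paper avoids the mismatch by keeping everything inside $\Bor(\pi(A))\subset\pi(A)^{**}$. Second, your verification of (s3) remains a sketch resting on an unproved ``key lemma'' (and on the incorrect intermediate assertion that $\widehat\omega$ is a normal, vector-induced state on $\pi(A)''$ -- a pure state of $\pi(A)$ need not extend normally to $\pi(A)''$, as you yourself note). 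In the paper's construction (s3) is elementary: if $\widehat\omega\circ\lambda\neq 0$, the Schwarz inequality forces $\widehat\omega(1-e)>0$, purity of $\omega$ and centrality of $e$ force $\widehat\omega(1-e)=1$, so $1-e$ lies in the multiplicative domain of $\widehat\omega$ and $\widehat\omega\circ\lambda|_{\pi(A)}=\omega$, which is pure. If you want to salvage your route, you would have to either prove the homomorphic cross-section you invoke (unlikely in this generality) or redesign the construction along the lines of the central-projection trick.
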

\begin{proof}
Throughout, we let $j:\pi(A)\to \pi(A)''$ denote the inclusion, with its canonical weak-$*$ continuous extension $\widehat j :\pi(A)^{**}\to \pi(A)''$.

Fix $t\in \pi(A)''$ self-adjoint. It follows from \cite[Corollary 4.5.10]{eilers2018c} that there is $b\in \Bor(\pi(A))$ such that $\widehat{j}(b)=t$. Let $B=\rC^*(\pi(A),b)\subset \Bor(\pi(A))$ and $D=\rC^*(\pi(A),t)\subset \pi(A)''$. Then, $\widehat j|_{B}:B\to D$ is surjective. Since $B$ is a separable $\rC^*$-algebra, there exists an increasing sequence $(f_n)$ of positive contractions in $\ker\widehat j\cap B$ forming a quasicentral approximate unit relative to $B$. Then, $(f_n)$ increases to some projection $e\in \pi(A)^{**}$ in the weak-$*$ topology. Since each $f_n$ lies in $B\subset \Bor(\pi(A))$, it follows that $e\in \Bor(\pi(A))$ as well. In addition, we see that $e$ is central in $B$. Since $B$ contains $\pi(A)$,  $e$ commutes with $\pi(A)$ and hence is central in $\pi(A)^{**}$.

Next, using that $\widehat j(e)=0$, it is readily seen that the restriction $\widehat j:B(1-e)\to D$ is a $*$-isomorphism, and we denote its inverse by $\lambda$, so that 
\begin{equation}\label{Eq:lambda}
\lambda(\pi(a))=\pi(a)(1-e)
\end{equation} 
for every $a\in A$. In particular, we have $\widehat j \circ \lambda=\id$ on $D$. 
We claim that $\lambda$ is the desired splitting.

First, we have $\lambda(t)\in B(1-e)\subset \Bor(\pi(A))$ since $e\in \Bor(\pi(A))$. Hence, property (s1) holds  \cite[Lemma 4.5.12]{eilers2018c}. Furthermore, it is trivial that $\widehat j\circ \lambda(t)=t$ by construction, so that property (s2) is satisfied as well.

Finally, let $\omega$ be a pure state on $\pi(A)$ such that $\widehat\omega \circ \lambda\neq 0$. Let $x\in D$ with $\|x\|=1$ such that $(\widehat\omega\circ \lambda)(x)\neq 0$. By construction, we see that $\lambda(x^*x)\leq 1-e$ so the Schwarz inequality implies 
\[
|(\widehat\omega \circ \lambda)(x)|^2\leq \widehat\omega(\lambda(x^*x))\leq \widehat\omega(1-e).
\]
We infer that $\widehat\omega(1-e)>0$. Since $e$ is central and $\omega$ is pure, this forces $\widehat\omega(1-e)=1$, so that $1-e$ lies in the multiplicative domain of $\widehat\omega$ and $\widehat\omega\circ\lambda|_{\pi(A)}=\omega$ by \eqref{Eq:lambda}. Hence, $\widehat\omega\circ\lambda|_{\pi(A)}$ is pure, and property (s3) holds.
\end{proof}

We now recall some terminology from the introduction. 
Let $A$ be a unital $\rC^*$-algebra and let $S\subset A$ be an operator system such that $A=\rC^*(S)$. 
Let $\pi:A\to B(H)$ be a unital $*$-representation. A unital completely positive map $\psi:A\to B(H)$ agreeing with $\pi$ on $S$  will be said to be a \emph{tight extension} of $\pi$ if $\psi(A)\subset \pi(A)''$. If the only such tight extension is $\pi$ itself, then we say that $\pi$ has the \emph{unique tight extension property} with respect to $S$. We can now state one of the main results of the paper, saying that Arveson's conjecture is valid (in the separable setting) provided one focuses on tight extensions.

\begin{theorem}\label{T:tightHR}
Let $A$ be a  unital $\rC^*$-algebra and let $S\subset A$ be an operator system such that $A=\rC^*(S)$. Then, the following statements are equivalent.
\begin{enumerate}[{\rm (i)}]
\item Every irreducible $*$-representation of $A$ is a boundary representation for $S$. 
\item Every unital $*$-representation of $A$ has the unique tight extension property with respect to $S$.
\end{enumerate}
Then, ${\rm (ii)}\Rightarrow {\rm (i)}$. When $A$ is separable, we have ${\rm (i)}\Leftrightarrow {\rm (ii)}$.
\end{theorem}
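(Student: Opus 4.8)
The plan is to treat the two directions separately, with ${\rm (ii)}\Rightarrow{\rm (i)}$ requiring no separability.  For that direction, let $\pi:A\to B(H)$ be an irreducible $*$-representation.  Then $\pi(A)'=\bC I$ by Schur's lemma, so $\pi(A)''=B(H)$.  Consequently, \emph{every} unital completely positive map $\psi:A\to B(H)$ agreeing with $\pi$ on $S$ automatically has range inside $\pi(A)''$, i.e.\ is a tight extension of $\pi$; invoking ${\rm (ii)}$ forces $\psi=\pi$, which says precisely that $\pi$ is a boundary representation for $S$.

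For the converse, assume $A$ is separable and fix a unital $*$-representation $\pi:A\to B(H)$ together with a tight extension $\psi:A\to\pi(A)''$.  Since both maps are linear, it suffices to prove $\psi(a)=\pi(a)$ for every self-adjoint $a\in A$.  The obstacle is that $H$ need not be separable, so Theorem \ref{T:meassplitexist} does not apply directly; I would circumvent this by compressing to a cyclic subspace.  Suppose, towards a contradiction, that $t:=\psi(a)-\pi(a)$ is a nonzero self-adjoint element of $\pi(A)''$, and pick a unit vector $\xi\in H$ with $\langle t\xi,\xi\rangle\neq 0$.  Set $H_0=\overline{\pi(A)\xi}$, which is separable because $A$ is, let $P\in\pi(A)'$ be the orthogonal projection onto $H_0$, and put
\[
\pi_0(\cdot)=P\pi(\cdot)P\big|_{H_0},\qquad \psi_0(\cdot)=P\psi(\cdot)P\big|_{H_0}.
\]
Then $\pi_0$ is a cyclic unital $*$-representation on the separable space $H_0$, and, using that $P$ commutes with $\pi(A)$, one checks routinely that $\psi_0$ is a unital completely positive map agreeing with $\pi_0$ on $S$, that $\pi_0(A)''=P\pi(A)''P\big|_{H_0}$, and hence that $\psi_0$ is a tight extension of $\pi_0$.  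Moreover $\langle(\psi_0(a)-\pi_0(a))\xi,\xi\rangle=\langle t\xi,\xi\rangle\neq 0$ since $P\xi=\xi$, so $\psi_0(a)\neq\pi_0(a)$.

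Now the separable theory applies to $\pi_0$.  By Theorem \ref{T:meassplitexist}, $\pi_0$ admits a measurable splitting at the self-adjoint element $\psi_0(a)-\pi_0(a)\in\pi_0(A)''$.  Since hypothesis ${\rm (i)}$ concerns only $A$ and $S$ and therefore remains in force, Theorem \ref{T:meastrick} applied to $\pi_0$ and $\psi_0$ yields $\psi_0(a)=\pi_0(a)$, contradicting the previous paragraph.  Hence $\psi(a)=\pi(a)$ for all self-adjoint $a$, so $\psi=\pi$, proving ${\rm (ii)}$.

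The only step that genuinely deserves care is the claim that tightness is inherited by the compression, i.e.\ that $P\pi(A)''P\big|_{H_0}$ coincides with $\pi_0(A)''$.  This is the standard fact that for a von Neumann algebra $M\subset B(H)$ and a projection $p\in M'$ the reduced algebra $pMp$ acting on $pH$ is again a von Neumann algebra (with commutant $M'p$); applying it with $M=\pi(A)''$, $p=P$ and combining with weak-$*$ density of $\pi(A)$ in $\pi(A)''$ gives $P\psi(a)P\big|_{H_0}\in\pi_0(A)''$.  Everything else — unitality and complete positivity of $\psi_0$, the agreement on $S$, and the vector-state computation — is immediate from $P\xi=\xi$ and $P\in\pi(A)'$.
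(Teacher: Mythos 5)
Your proof is correct, and its skeleton is the same as the paper's: the implication ${\rm (ii)}\Rightarrow{\rm (i)}$ is handled identically via $\pi(A)''=B(H)$, and the separable direction ${\rm (i)}\Rightarrow{\rm (ii)}$ rests on exactly the same two ingredients, Theorem \ref{T:meassplitexist} (existence of measurable splittings for representations on separable spaces) followed by Theorem \ref{T:meastrick}. The only genuine difference is how you reduce to a separable Hilbert space: the paper decomposes $H$ globally into cyclic summands, checks that $\pi(A)''\subset\prod_i\pi_i(A)''$ so that $\psi$ splits as $\bigoplus_i\psi_i$ with $\psi_i$ tight over $\pi_i$, and then treats each summand; you instead argue by contradiction, pick a unit vector $\xi$ witnessing $\langle(\psi(a)-\pi(a))\xi,\xi\rangle\neq 0$, and compress to the single cyclic subspace $\overline{\pi(A)\xi}$. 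Your variant trades the direct-sum bookkeeping for the (equally standard) facts that the compression by a projection $P\in\pi(A)'$ is a $*$-representation with $\pi_0(A)''=P\pi(A)''P|_{H_0}$ (weak-$*$ continuity of compression plus the double commutant theorem), and that a nonzero self-adjoint operator has a nonvanishing vector state; all of these checks are carried out correctly, so the argument is complete, and arguably slightly leaner than the paper's since tightness only needs to be verified for one compression rather than for every summand.
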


\begin{proof}
(ii) $\Rightarrow$ (i): Let $\pi:A\to B(H)$ be an irreducible $*$-representation and let $\psi:A\to B(H)$ be a completely positive extension of $\pi|_S$. Since $\pi(A)''=B(H)$, we see that $\psi$ is a tight extension of $\pi$, so that $\psi=\pi$ by assumption. We conclude that $\pi$ is a boundary representation for $S$.

(i) $\Rightarrow$ (ii):  Assume here that $A$ is separable. Let $\pi:A\to B(H)$ be a unital $*$-representation and let $\psi:A\to \pi(A)''$ be a unital completely positive extension of $\pi|_S$. We can find a collection $\{H_i\}$ of pairwise orthogonal closed subspaces of $H$ such that $H=\bigoplus_i H_i$, along with cyclic $*$-representations $\pi_i:A\to B(H_i)$ such that $\pi=\bigoplus_{i} \pi_i$. Note then that each $H_i$ is separable.  It is readily verified that $\pi(A)''\subset \prod_i \pi_i(A)''$. Consequently, there are unital completely positive maps $\psi_i:A\to \pi_i(A)''$ such that $\psi=\bigoplus_i \psi_i$, and $\psi_i$ and $\pi_i$ agree on $S$ for each $i$.

Since it clearly suffices to show that $\pi_i$ and $\psi_i$ agree on $A$ for each $i$, we may as well assume that $H$ is separable to begin with. In particular, $\pi$ admits a measurable splitting at every self-adjoint element in $\pi(A)''$ by virtue of Theorem \ref{T:meassplitexist}.  
Hence $\psi$ and $\pi$ agree on the self-adjoint part of $A$  by Theorem \ref{T:meastrick}. It then easily follows that $\pi$ and $\psi$ agree on $A$.
\end{proof}

\subsection{A noncommutative Korovkin--\v Sa\v skin rigidity principle}\label{SS:ncKS}

Armed with an amended version of Arveson's hyperrigidity conjecture, we now explore  noncommutative analogues  of the Korovkin--\v Sa\v skin rigidity principle from the introduction. 

We introduce some terminology that will simplify the statements below.
Let $A$ and $B$ be unital $\rC^*$-algebras and let $\psi:A\to B$ be a unital completely positive map. Let $\Lambda$ be a directed set. For each $\lambda\in \Lambda$, assume that we are given an operator system $E_\lambda\subset A$ such that the net $(E_\lambda)$ is increasing (with respect to inclusion) and satisfies $A=\bigcup_{\lambda\in \Lambda}E_\lambda$. A net $\psi_\lambda:E_\lambda\to B$ of unital completely positive maps is a \emph{weak approximation} for $\psi$ on  some subset $X\subset A$ if the following property holds: for each $a\in X$ there is an index $\mu$ such that $a\in E_{\lambda}$ for every $\lambda\geq \mu$, and the net $(\psi_\lambda(a))_{\lambda\geq \mu}$ converges to $\psi(a)$ in the weak topology of $B$.

Let $\pi:A\to B(H)$ be a unital $*$-representation. Recall that we say that $\pi$ has the \emph{Korovkin rigidity property} with respect to $S$ if any net $\phi_\lambda:A\to \pi(A)$
of unital completely positive maps forming a weak approximation for $\pi$ on $S$ must necessarily be a weak approximation for $\pi$ on $A$. We emphasize here that the maps $\phi_\lambda$ must be defined on the entire $\rC^*$-algebra.

Our next goal is to connect the notions above to the unique tight extension property, for a large class of $\rC^*$-algebras. Recall that a unital $\rC^*$-algebra $A$ is \emph{locally reflexive} if for every finite-dimensional operator system $F\subset A^{**}$, there is a net of unital completely positive maps $\rho_i:F\rightarrow A$ such that, for each $x\in F$, the net $(\rho_i(x))$ converges to $x$ in the weak-$*$ topology of $A^{**}$.  
We note that exact (in particular, nuclear) $\rC^*$-algebras are locally reflexive \cite[Corollary 9.4.1]{brown2008textrm}.

We now establish the criterion on which our  main result relies.

\begin{theorem}\label{T:locref}
Let $A$ be a unital $\rC^*$-algebra and let $S\subset A$ be an operator system such that $\rC^*(S) = A$.  Let $\pi:A\to B(H)$ be a unital $*$-representation. Consider the following statements.
\begin{enumerate}[{\rm (i)}]
\item Let $\Upsilon_\pi:\pi(A)\to B(H_\pi)$ denote the universal representation of $\pi(A)$. Then, $\Upsilon_\pi\circ \pi$ has the unique tight extension property with respect to $S$.
\item Every weak approximation of $\pi:A\to \pi(A)$ on $S$ is necessarily a weak approximation on $A$.
\item $\pi:A\to \pi(A)$ has the Korovkin rigidity property with respect to $S$.
\end{enumerate}
Then, we have that ${\rm (i)}\Rightarrow {\rm (ii)}\Rightarrow {\rm (iii)}$. When $A$ is locally reflexive, we have ${\rm (i)}\Leftrightarrow {\rm (ii)}\Rightarrow {\rm (iii)}$. When $A$ is nuclear, we have ${\rm (i)}\Leftrightarrow {\rm (ii)}\Leftrightarrow {\rm (iii)}$.
\end{theorem}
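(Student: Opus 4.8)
The plan is to prove the three implications and two reverse implications in turn, using the structural results already established. The implication $(i)\Rightarrow(ii)$ is the heart of the matter and I would attack it as follows. Suppose $(\psi_\lambda)$, $\psi_\lambda:E_\lambda\to\pi(A)$, is a weak approximation of $\pi$ on $S$ but not on $A$; then there is a self-adjoint $a\in A$, a weak-$*$ continuous state $\tau$ on $\pi(A)$, an $\eps>0$, and a cofinal subnet along which $|\tau(\psi_\lambda(a))-\tau(\pi(a))|\geq\eps$. Compose everything with the universal representation $\Upsilon_\pi$ of $\pi(A)$: the maps $\Upsilon_\pi\circ\psi_\lambda:E_\lambda\to B(H_\pi)$ are unital completely positive, so by Arveson's extension theorem each extends to a unital completely positive map $\widetilde\psi_\lambda:A\to B(H_\pi)$. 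The unit ball of the completely positive maps $A\to B(H_\pi)$ is compact in the point--weak-$*$ topology (equivalently the point--ultraweak topology, using that $B(H_\pi)$ is the dual of trace class and that $\Upsilon_\pi(\pi(A))''$ sits ultraweakly), so passing to a subnet we obtain a limit point $\psi_\infty:A\to B(H_\pi)$, which is unital completely positive. Since $(\psi_\lambda)$ approximates $\pi$ on $S$, we get $\psi_\infty|_S=\Upsilon_\pi\circ\pi|_S$. Crucially, each $\widetilde\psi_\lambda$ has range in $\Upsilon_\pi(\pi(A))\subset\Upsilon_\pi(\pi(A))''=(\Upsilon_\pi\circ\pi)(A)''$ — wait, only the values on $E_\lambda$ do; but after taking the point--ultraweak limit the value $\psi_\infty(a)$ for $a\in A$ is an ultraweak limit of elements $\widetilde\psi_\lambda(a)$, and for $\lambda$ large $a\in E_\lambda$ so $\widetilde\psi_\lambda(a)=\Upsilon_\pi(\psi_\lambda(a))\in\Upsilon_\pi(\pi(A))$. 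Hence $\psi_\infty(A)\subset\overline{\Upsilon_\pi(\pi(A))}^{\,\mathrm{w}*}=(\Upsilon_\pi\circ\pi)(A)''$, so $\psi_\infty$ is a \emph{tight} extension of $\Upsilon_\pi\circ\pi$. By hypothesis $(i)$, $\psi_\infty=\Upsilon_\pi\circ\pi$; but $\tau$ extends (via the universal representation, since $\tau$ is weak-$*$ continuous on $\pi(A)$ hence a normal state, and $\Upsilon_\pi$ realizes all states of $\pi(A)$ as vector states) to a vector state on $B(H_\pi)$ that is ultraweakly continuous, and this yields $\tau(\psi_\lambda(a))\to\tau(\pi(a))$ along the subnet, contradicting the choice of the subnet.

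\textbf{The remaining implications.} For $(ii)\Rightarrow(iii)$: a Korovkin weak approximation in the sense of the Korovkin rigidity property is a net $\phi_\lambda:A\to\pi(A)$ of unital completely positive maps, i.e. the special case $E_\lambda=A$ for all $\lambda$ of a weak approximation; so if it approximates $\pi$ on $S$ it approximates $\pi$ on $A$ by $(ii)$, which is exactly the Korovkin rigidity property. For $(ii)\Rightarrow(i)$ under local reflexivity: suppose $\psi:A\to(\Upsilon_\pi\circ\pi)(A)''$ is a unital completely positive tight extension of $\Upsilon_\pi\circ\pi$ on $S$. Write $(\Upsilon_\pi\circ\pi)(A)''=\pi(A)^{**}$ (the universal representation identifies the bidual with the double commutant), and let $F$ range over the upward-directed net of finite-dimensional operator systems of $A$. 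Apply local reflexivity of $A$ to the finite-dimensional operator system $\psi(F)\subset\pi(A)^{**}$: there are unital completely positive maps $\rho_{F,i}:\psi(F)\to\pi(A)$ with $\rho_{F,i}\to\mathrm{id}$ point--weak-$*$. Then $\rho_{F,i}\circ\psi|_{F}:F\to\pi(A)$ is unital completely positive, and the resulting net (indexed by pairs $(F,i)$ in the product order) is a weak approximation of $\psi$ — hence of $\pi$ on $S$, since $\psi=\pi$ on $S$ and each such restriction tends weak-$*$ to $\psi(s)=\pi(s)$ in $\pi(A)^{**}$, hence weakly in $\pi(A)$. Strictly one must interpolate a unital completely positive extension $\widehat{\rho_{F,i}\circ\psi}$ defined on a larger operator system containing $F$ (again Arveson extension), arranging the domains to be increasing and exhaust $A$; this is a standard bookkeeping step. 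By $(ii)$ this net weakly approximates $\pi$ on all of $A$, i.e. $\rho_{F,i}(\psi(a))\to\pi(a)$ weakly for each $a$. But $\rho_{F,i}(\psi(a))\to\psi(a)$ weak-$*$ in $\pi(A)^{**}$ (as $\rho_{F,i}\to\mathrm{id}$), and weak-$*$ convergence in $\pi(A)^{**}$ restricted to $\pi(A)$ agrees with weak convergence in $\pi(A)$, so $\psi(a)=\pi(a)$ for every $a$. Hence $(i)$ holds. Finally, when $A$ is nuclear, we also want $(iii)\Rightarrow(ii)$ (closing the loop); for this, I would exploit the Choi--Effros lifting / nuclearity to replace an arbitrary weak approximation $\psi_\lambda:E_\lambda\to\pi(A)$ by genuine unital completely positive maps $\widetilde\psi_\lambda:A\to\pi(A)$ agreeing asymptotically with $\psi_\lambda$ on the relevant elements: nuclearity of $A$ gives a factorization $\pi=\lim(\beta_k\circ\alpha_k)$ through matrix algebras, and composing $\psi_\lambda$ with the finite-rank approximants of $A$ produces full-domain maps; then $(iii)$ applies to the doctored net and one transfers the conclusion back. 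The point is that nuclearity removes the a priori gap between "partially defined weak approximation" and "Korovkin net".

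\textbf{Main obstacle.} The delicate point is the $(i)\Rightarrow(ii)$ step, specifically verifying that the point--ultraweak limit $\psi_\infty$ genuinely has range inside the double commutant $(\Upsilon_\pi\circ\pi)(A)''$ rather than merely in its weak-$*$ closure inside $B(H_\pi)$ — but since $(\Upsilon_\pi\circ\pi)(A)=\Upsilon_\pi(\pi(A))$ is $\sigma$-weakly dense in its bicommutant, and the values $\widetilde\psi_\lambda(a)$ for $\lambda$ large are already in $\Upsilon_\pi(\pi(A))$, the limit lands in $(\Upsilon_\pi\circ\pi)(A)''=\pi(A)^{**}$; and the passage from "tight extension on $S$" to "equal to $\pi$ on all of $A$" is precisely what hypothesis $(i)$ delivers via Theorem~\ref{T:tightHR} applied inside the universal representation. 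The second genuinely substantive point is in $(iii)\Rightarrow(ii)$ for nuclear $A$: one must be careful that the doctoring via nuclear approximants does not destroy the hypothesis (approximation of $\pi$ on $S$) and that the conclusion transfers — this is where the nuclearity hypothesis is used essentially and cannot be weakened to local reflexivity by the present method.
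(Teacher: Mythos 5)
Your arguments for ${\rm (i)}\Rightarrow{\rm (ii)}$, ${\rm (ii)}\Rightarrow{\rm (iii)}$, and (modulo bookkeeping) ${\rm (ii)}\Rightarrow{\rm (i)}$ under local reflexivity are essentially the paper's: Arveson-extend $\Upsilon_\pi\circ\psi_\lambda$, take a point--weak-$*$ cluster point, observe it is a tight extension of $\Upsilon_\pi\circ\pi$ because the relevant values eventually lie in $\Upsilon_\pi(\pi(A))$, and use that $\widehat{\Upsilon_\pi}$ is a weak-$*$ homeomorphism so that weak convergence in $\pi(A)$ matches weak-$*$ convergence in $(\Upsilon_\pi\circ\pi)(A)''$; for the converse, local reflexivity produces the partially defined net $\phi_\lambda\circ\widehat{\Upsilon_\pi}^{-1}\circ\psi|_F$. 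Two small points there: you invoke ``local reflexivity of $A$'' for finite-dimensional subsystems of $\pi(A)^{**}$, whereas what is needed is local reflexivity of the quotient $\pi(A)$ (the paper cites Effros--Haagerup for this; alternatively you must justify viewing $\pi(A)^{**}$ as a weak-$*$ closed direct summand of $A^{**}$ and push forward along $\widehat\pi$), and your index set ``$(F,i)$ in the product order'' is not literally a directed set since the $i$'s depend on $F$ --- the clean fix is the paper's directed set of triples $(\eps,F,G)$ with one map chosen per triple.

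The genuine gap is the nuclear implication. Your sketch of ${\rm (iii)}\Rightarrow{\rm (ii)}$ --- ``compose $\psi_\lambda$ with the finite-rank approximants of $A$ to produce full-domain maps, apply (iii) to the doctored net, transfer back'' --- does not typecheck: $\psi_\lambda$ is defined only on $E_\lambda$, and for a CPAP factorization $\beta_k\circ\alpha_k\approx\id_A$ through $\bM_{n_k}$ there is no reason that $\beta_k(\bM_{n_k})\subset E_\lambda$, so $\psi_\lambda\circ\beta_k\circ\alpha_k$ is undefined; and if instead you Arveson-extend $\psi_\lambda$ to all of $A$, the extension takes values in $B(H)$ rather than in $\pi(A)$, so the resulting net is not admissible for the Korovkin rigidity property. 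The paper avoids this entirely by proving ${\rm (iii)}\Rightarrow{\rm (i)}$ directly: nuclearity is applied not to the given partially defined net but to the \emph{globally defined} tight extension $\psi:A\to(\Upsilon_\pi\circ\pi)(A)''$. One factors $\id_A\approx\theta\circ\rho$ through $\bM_r$, considers $\widehat{\Upsilon_\pi}^{-1}\circ\psi\circ\theta:\bM_r\to\pi(A)^{**}$, and uses the standard fact (the argument of Brown--Ozawa, Proposition 2.3.8: completely positive maps out of $\bM_r$ correspond to positive elements of $\bM_r(\pi(A))^{**}$, which are weak-$*$ limits of positives from $\bM_r(\pi(A))$) to replace it, up to finitely many functionals, by a completely positive map into $\pi(A)$, followed by a unitalization correction $t\mapsto e^{-1/2}\alpha_0(t)e^{-1/2}$. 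This yields a genuine full-domain net $\psi_\lambda=\alpha\circ\rho:A\to\pi(A)$ converging point--weak-$*$ to $\widehat{\Upsilon_\pi}^{-1}\circ\psi$; it weakly approximates $\pi$ on $S$, so (iii) forces approximation on $A$ and hence $\psi=\Upsilon_\pi\circ\pi$. Since (i)$\Rightarrow$(ii) holds in general, the loop closes without ever needing your ${\rm (iii)}\Rightarrow{\rm (ii)}$ doctoring step; as written, that step of your proposal is missing the key idea and would need to be replaced by an argument of this kind.
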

\begin{proof}
By the well-known properties of the universal representation,  the weak-$*$ continuous surjective $*$-homomorphism $\widehat{\Upsilon_\pi}:\pi(A)^{**}\to (\Upsilon_\pi\circ\pi)(A)''$ is isometric. This will be used throughout the proof. 

(ii)$\Rightarrow$(iii): This is trivial.

(i)$\Rightarrow$(ii):  Let $\psi_\lambda:E_\lambda\to \pi(A)$ be a weak approximation of $\pi$ on $S$. For each $\lambda$, let $\Psi_\lambda:A\to B(H_\pi)$ be a unital completely positive extension of $\Upsilon_\pi\circ  \psi_\lambda$. Let $\Psi:A\to B(H_\pi)$ be a cluster point of $(\Psi_\lambda)$ in the pointwise weak-$*$ topology, so that $\Psi$ is also unital and completely positive. 

We now show that $\Psi$ and $\Upsilon_\pi\circ \pi$ agree on $S$.
Let $s\in S$ and choose $\mu_1$ such that $s\in E_\lambda$ for each $\lambda\geq \mu_1$. Then,  $(\psi_\lambda(s))_{\lambda\geq \mu_1}$ is a net in $\pi(A)$ converging to $\pi(s)$ in the weak topology of $\pi(A)$. Equivalently, $(\psi_\lambda(s))_{\lambda\geq \mu_1}$  converges to $\pi(s)$ in the weak-$*$ topology of $\pi(A)^{**}$. Applying $\widehat{\Upsilon_\pi}$, we find that, $((\Upsilon_\pi\circ \psi_\lambda)(s))_{\lambda\geq \mu_1}$ converges to $\Upsilon_\pi(\pi(s))$ in the weak-$*$ topology of $(\Upsilon_\pi\circ \pi)(A)''$. Therefore, $\Psi(s)=\Upsilon_\pi(\pi(s))$.

Next, we claim that $\Psi$ takes values in $(\Upsilon_\pi\circ \pi)(A)''$. To see this, fix $a\in A$ and choose $\mu_2$ such that $a\in E_\lambda$ for each $\lambda\geq \mu_2$. By construction,  $\Psi(a)$ is a weak-$*$ cluster point of the net $((\Upsilon_\pi\circ \psi_\lambda)(a))_{\lambda\geq \mu_2}$ in $B(H_\pi)$. Since this net lies in $(\Upsilon_\pi\circ \pi)(A)$, we can conclude that $\Psi(a)\in (\Upsilon_\pi\circ \pi)(A)''$, and the claim is established. Hence, $\Psi$ is a tight extension of $\Upsilon_\pi\circ \pi$, and our assumption then yields $\Psi=\Upsilon_\pi\circ\pi$.

We have  thus shown that any cluster point of the net $(\Psi_\lambda)$ in the poinwise weak-$*$ topology must coincide with $\Upsilon_\pi\circ \pi$, so by compactness we infer that the net $(\Psi_\lambda)$ itself converges to $\Upsilon_\pi\circ \pi$.

Finally, let $b\in A$ and choose $\mu_3$ such that $b\in E_\lambda$ for each $\lambda\geq \mu_3$. We know that the net $((\Upsilon_\pi\circ \psi_\lambda)(b))_{\lambda\geq \mu_3}$ lies in $(\Upsilon_\pi\circ \pi)(A)$ and converges to $(\Upsilon_\pi\circ\pi)(b)$ in the weak-$*$ topology of $(\Upsilon_\pi\circ \pi)(A)''$, by the previous paragraph. Applying $(\widehat{\Upsilon_\pi})^{-1}$, we infer in turn that the net $(\psi_\lambda(b))_{\lambda\geq \mu_3}$ converges to $\pi(b)$ in the weak-$*$ topology of $\pi(A)^{**}$. Since $(\psi_\lambda(b))_{\lambda\geq \mu_3}$ is a net in $\pi(A)$, this is equivalent to convergence in the weak topology of $\pi(A)$. Hence $\psi_\lambda:E_\lambda\to \pi(A)$ is indeed a weak approximation of $\pi$ on $A$.

(ii)$\Rightarrow$(i): We assume here that $A$ is locally reflexive. It follows from \cite[Proposition 5.3]{effros1985lifting} that $\pi(A)$ is also locally reflexive. Let $\Lambda$ denote the directed set of triples $(\eps,F,G)$ where $\eps>0$, $F\subset A$ is a finite-dimensional operator system and $G\subset \pi(A)^*$ is a finite subset. Let $\psi:A\to (\Upsilon_\pi\circ \pi)(A)''$ be a unital completely positive map agreeing with $\Upsilon_\pi\circ \pi$ on $S$. By local reflexivity, for each $\lambda=(\eps ,F,G) \in\Lambda$ there is a unital completely positive map $\phi_\lambda:(\widehat{\Upsilon_\pi}^{-1}\circ \psi)(F)  \to \pi(A)$ such that 
\begin{equation}\label{Eq:locref}
|\eta ( (\phi_\lambda\circ \widehat{\Upsilon_\pi}^{-1}\circ \psi)(a)-(\widehat{\Upsilon_\pi}^{-1}\circ \psi)(a))|<\eps  \quad  \text{ for every } a\in F, \eta\in G.
\end{equation}

For $s\in S$, there is $\mu_1$ such that if $\lambda=(\eps,F,G)\geq \mu_1$, then $s\in F$. It then follows from \eqref{Eq:locref}  that the net $( (\phi_\lambda\circ \widehat{\Upsilon_\pi}^{-1}\circ \psi)(s))_{\lambda\geq \mu_1}$ converges to $(\widehat{\Upsilon_\pi}^{-1}\circ \psi)(s)=\pi(s)$ in the weak-$*$ topology of $\pi(A)^{**}$. In turn, because $ (\phi_\lambda\circ \widehat{\Upsilon_\pi}^{-1}\circ \psi)(s)\in \pi(A)$ for each $\lambda$, we conclude that the net $( (\phi_\lambda\circ \widehat{\Upsilon_\pi}^{-1}\circ \psi)(s))_{\lambda\geq \mu_1}$  converges to $\pi(s)$ in the weak topology of $\pi(A)$. In other words, the net $ (\phi_\lambda\circ \widehat{\Upsilon_\pi}^{-1}\circ \psi) $ is a weak approximation for $\pi$ on $S$. By assumption, this implies that this net is a weak approximation for $\pi$ on all of $A$. 

Fix $a\in A$ and $\mu_2\in\Lambda$ such that $a\in F$ for each triple $(\eps,F,G)\geq \mu_2$. By \eqref{Eq:locref} again,  the net $( (\phi_\lambda\circ \widehat{\Upsilon_\pi}^{-1}\circ \psi)(a))_{\lambda\geq \mu_2}$ converges to $((\widehat{\Upsilon_\pi})^{-1}\circ \psi)(a)$ in the weak-$*$ topology of $\pi(A)^{**}$. On the other hand, the previous paragraph shows that it also converges to $\pi(a)$ in this topology, whence $(\Upsilon_\pi\circ \pi)(a)=\psi(a)$. We conclude that $\Upsilon_\pi\circ \pi$ has the unique tight extension property.

(iii) $\Rightarrow$ (i): We assume here that $A$ is nuclear. Let $\psi:A\to (\Upsilon_\pi\circ \pi)( A)''$ be a unital completely positive map agreeing with $\Upsilon_\pi\circ \pi$ on $S$. We let $\Lambda$ be the directed set of triples $(\eps,F,G)$ where $0<\eps<1$, and $F\subset A$ and $G\subset \pi(A)^*$ are finite subsets of norm one elements. 

For $\lambda=(\eps,F,G)\in \Lambda$, choose $0<\delta<\min\{\eps,\sqrt{3}/2\}$ small enough so that 
\begin{equation}\label{Eq:delta}
(1+\delta) \frac{1}{\sqrt{1-\delta}} \left(\frac{1}{\sqrt{1-\delta}}-1\right)<\eps.
\end{equation}
By nuclearity there is an integer $r\geq 1$ along with unital completely positive maps $\rho:A\to \bM_r$ and $\theta:\bM_r\to A$ such that
\begin{equation}\label{Eq:nuc}
\|(\theta\circ \rho)(a)-a\|<\delta, \quad a\in F.
\end{equation}
Consider  $\widehat {\Upsilon_\pi}^{-1}\circ \psi\circ\theta:\bM_r\to \pi(A)^{**}$. We now argue as in the proof of \cite[Proposition 2.3.8]{brown2008textrm} to find a completely positive map $\alpha_0:\bM_r\to  \pi(A)$ such that
\begin{equation}\label{Eq:kap}
|\eta((\alpha_0 \circ \rho)(a)-(\widehat {\Upsilon_\pi}^{-1}\circ \psi\circ \theta\circ \rho)(a))|<\delta  \quad \text{for every } a\in F, \eta\in G
\end{equation}
and $\|\alpha_0(1)-1\|<\delta$. The positive element $e= \alpha_0(1)\in A$ is then invertible with spectrum contained  in the interval $(1-\delta,1+\delta)$. Define  a unital completely positive map $\alpha:\bM_r\to A$ as 
\[
\alpha(t)=e^{-1/2} \alpha_0(t)e^{-1/2},\quad t\in \bM_r.
\]
Functional calculus reveals that $\|e^{-1/2}\|\leq 1/\sqrt{1-\delta}$ and 
\[\|e^{-1/2}-1\|\leq \max\left\{\frac{1}{\sqrt{1-\delta}}-1,1-\frac{1}{\sqrt{1+\delta}}\right\}=\frac{1}{\sqrt{1-\delta}}-1
\]
where we used that $\delta<\sqrt{3}/2$.
Combining these estimates with \eqref{Eq:delta},\eqref{Eq:nuc} and \eqref{Eq:kap}, a routine calculation yields
\begin{equation}\label{Eq:approx}
|\eta((\alpha \circ \rho)(a)-(\widehat {\Upsilon_\pi}^{-1} \circ \psi)(a))|<3\eps \quad \text{for every } a\in F, \eta\in G.
\end{equation}
Finally, put $\psi_\lambda=\alpha\circ \rho:A\to \pi(A)$. 

By \eqref{Eq:approx}, we see that the net $(\psi_\lambda)$ converges to $\widehat {\Upsilon_\pi}^{-1}\circ \psi$ pointwise in the weak-$*$ topology of $\pi(A)^{**}$.  
In particular, we see that $( \psi_\lambda(s))$ converges to $\pi(s)$ in the weak-$*$ topology of $\pi(A)^{**}$ for every $s\in S$. Therefore, the net $( \psi_\lambda)$ is a weak approximation of $\pi$ on $S$. By the Korovkin rigidity property, it must be a weak approximation for $\pi$ on $A$, and we infer that $( \psi_\lambda(a))$ converges to $\pi(a)$ in the weak-$*$ topology of $\pi(A)^{**}$ for each $a\in A$. Applying $\widehat{\Upsilon_\pi}$, we find that $((\Upsilon_\pi\circ \psi_\lambda)(a))$ converges to $(\Upsilon_\pi\circ \pi)(a)$ in the weak-$*$ topology of $(\Upsilon_\pi\circ\pi)(A)''$, whence $\psi(a)=(\Upsilon_\pi\circ \pi)(a)$. We conclude that $\Upsilon_\pi\circ \pi$ has the unique tight extension property, and the proof is complete.
\end{proof}

Next,  we record a standard fact.

\begin{lemma}\label{L:reptrick}
Let $A$ be a unital $\rC^*$-algebra and let $S\subset A$ be an operator system such that $A=\rC^*(S)$. Suppose $\pi$ and $\sigma$ are $*$-representations of $A$ and that there is a weak-$*$ continuous surjective $*$-homomorphism $\Omega:\pi(A)''\to \sigma(A)''$ such that $\Omega\circ \pi=\sigma$. Then, $\sigma$ has the unique tight extension property with respect to $S$ whenever $\pi$ has this property.
\end{lemma}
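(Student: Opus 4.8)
The plan is to transport a putative tight extension of $\sigma$ back to one of $\pi$ via the map $\Omega$. So suppose $\tau:A\to \sigma(A)''$ is a unital completely positive map agreeing with $\sigma$ on $S$; I want to show $\tau=\sigma$. Using that $\Omega:\pi(A)''\to\sigma(A)''$ is a weak-$*$ continuous surjective $*$-homomorphism, its kernel is a weak-$*$ closed two-sided ideal, hence of the form $\pi(A)''(1-p)$ for a central projection $p\in\pi(A)''$. The compression $x\mapsto xp$ restricted to the ``corner'' gives a weak-$*$ continuous $*$-isomorphism $\pi(A)''p\cong\sigma(A)''$; let $\iota:\sigma(A)''\to\pi(A)''$ denote the inverse of this isomorphism followed by the inclusion, so that $\iota$ is a (non-unital) weak-$*$ continuous $*$-homomorphism with $\Omega\circ\iota=\id_{\sigma(A)''}$ and $\iota(\sigma(a))=\pi(a)p$ for all $a\in A$. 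In particular $\iota(1)=p$.

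Next I would build the candidate tight extension of $\pi$. Set $\widetilde\tau:A\to\pi(A)''$ by
\[
\widetilde\tau(a)=\iota(\tau(a))+\pi(a)(1-p),\qquad a\in A.
\]
This is unital since $\iota(\tau(1))+\pi(1)(1-p)=\iota(1)+(1-p)=p+(1-p)=1$, and it is completely positive as a sum of two completely positive maps (note $\pi(\cdot)(1-p)$ is a $*$-homomorphism onto a corner of $\pi(A)''$, hence unital completely positive onto that corner, and $\iota\circ\tau$ is completely positive into $\pi(A)''p$, with the two ranges sitting in complementary central corners). It clearly takes values in $\pi(A)''$, so $\widetilde\tau$ is a tight candidate. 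On $S$ we compute, for $s\in S$, that $\widetilde\tau(s)=\iota(\tau(s))+\pi(s)(1-p)=\iota(\sigma(s))+\pi(s)(1-p)=\pi(s)p+\pi(s)(1-p)=\pi(s)$, using $\tau|_S=\sigma|_S$ and $\iota(\sigma(a))=\pi(a)p$. Hence $\widetilde\tau$ is a tight extension of $\pi$, and by hypothesis $\widetilde\tau=\pi$.

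It remains to read off $\tau=\sigma$ from $\widetilde\tau=\pi$. Apply the $*$-homomorphism $\Omega$ to both sides: for every $a\in A$,
\[
\Omega(\widetilde\tau(a))=\Omega(\iota(\tau(a)))+\Omega(\pi(a)(1-p))=\tau(a)+\sigma(a)\,\Omega(1-p)=\tau(a),
\]
because $\Omega\circ\iota=\id$ and $\Omega(1-p)=0$ (as $1-p$ generates $\ker\Omega$, indeed $\Omega(p)=\Omega(\iota(1))=1$ forces $\Omega(1-p)=0$ using $\Omega(1)=1$). On the other hand $\Omega(\widetilde\tau(a))=\Omega(\pi(a))=\sigma(a)$. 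Therefore $\tau(a)=\sigma(a)$ for all $a\in A$, which is exactly the unique tight extension property for $\sigma$.

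The one point requiring care — and the main thing to get right rather than a serious obstacle — is the structural claim that a weak-$*$ continuous surjective $*$-homomorphism between von Neumann algebras splits through a central-projection corner, giving the weak-$*$ continuous section $\iota$; this is standard (the kernel is weak-$*$ closed, hence $z\pi(A)''$ for a central projection $z=1-p$, and $\Omega$ restricted to the complementary corner is a weak-$*$ continuous $*$-isomorphism, whose inverse is automatically weak-$*$ continuous). Everything else is a routine check that $\widetilde\tau$ is unital completely positive with the right range and the right behavior on $S$, together with the one-line application of $\Omega$ to recover $\tau=\sigma$.
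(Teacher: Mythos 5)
Your proposal is correct and follows essentially the same route as the paper: write $\ker\Omega=\pi(A)''(1-p)$ for a central projection $p$, transport the putative tight extension $\tau$ of $\sigma$ to the candidate $\widetilde\tau(a)=\iota(\tau(a))+\pi(a)(1-p)$ (the paper's $\phi=(\Omega_0^{-1}\circ\psi)+\pi(\cdot)(1-z)$), invoke the unique tight extension property of $\pi$, and recover $\tau=\sigma$. The only cosmetic difference is that you apply $\Omega$ at the end while the paper compresses by the central projection and uses injectivity of $\Omega_0^{-1}$; these are the same observation.
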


\begin{proof}
Let $z\in \pi(A)''$ be the central projection such that $\ker \Omega=\pi(A)''(1-z)$. The restriction $\Omega_0=\Omega|_{z \pi(A)''}: z\pi(A)''\to \sigma(A)''$ is then a $*$-isomorphism. If $\psi:A\to \sigma(A)''$ is a unital completely positive map such that $\psi|_S=\sigma|_S$, then one may define a unital completely positive map $\phi:A\to \pi(A)''$ by
\[
\phi(a)=(\Omega_0^{-1}\circ \psi)(a)+ \pi(a)(1-z), \quad a\in A.
\]
A standard verification yields $\varphi|_S = \pi|_S$. Since $\pi$ is assumed to have the unique tight extension property with respect to $S$, we find $\phi=\pi$. In particular, for each $a\in A$ we find
\[
\Omega_0^{-1}(\psi(a))=z\phi(a)=z\pi(a)=\Omega_0^{-1}(\sigma(a))
\]
and hence $\psi(a)=\sigma(a)$ as desired.
\end{proof}

The following is our noncommutative analogue of the Korovkin--\v Sa\v skin rigidity principle, and is our second main result.

\begin{corollary}\label{C:nuclearrigprinc}
Let $A$ be a unital, separable, nuclear $\rC^*$-algebra and let $S\subset A$ be an operator system such that $A=\rC^*(S)$. Then, the following statements are equivalent.
\begin{enumerate}[{\rm (i)}]
\item Every irreducible $*$-representation of $A$ is a boundary representation for $S$.
\item The identity representation of $A$ has the  Korovkin rigidity property with respect to $S$.
\item Every unital $*$-representation $\pi:A\to \pi(A)$ has the Korovkin rigidity property with respect to $S$.
\end{enumerate}
\end{corollary}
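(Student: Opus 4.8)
The plan is to deduce Corollary \ref{C:nuclearrigprinc} by assembling the machinery already developed, chaining together Theorem \ref{T:tightHR}, Theorem \ref{T:locref}, and Lemma \ref{L:reptrick}. The skeleton of the argument is: (i) $\Leftrightarrow$ ``every unital $*$-representation has the unique tight extension property'' via Theorem \ref{T:tightHR} (the separability hypothesis is in force); this in turn should be shown equivalent to the unique tight extension property holding for the \emph{universal} representation of $\pi(A)$ composed with $\pi$, for all $\pi$, which is exactly the link that Theorem \ref{T:locref}(i)$\Leftrightarrow$(iii) provides when $A$ is nuclear; and finally Theorem \ref{T:locref} gives the Korovkin rigidity property for each $\pi:A\to\pi(A)$, with the identity representation being a special case. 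The implications (iii) $\Rightarrow$ (ii) and (ii) $\Rightarrow$ (i) close the loop, the latter coming from Theorem \ref{T:locref}(iii)$\Rightarrow$(i) applied to the identity representation.

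More concretely, I would argue as follows. \textbf{(i) $\Rightarrow$ (iii):} Assume (i). By Theorem \ref{T:tightHR}, every unital $*$-representation of $A$ has the unique tight extension property with respect to $S$. Fix a unital $*$-representation $\pi:A\to B(H)$ and consider $\Upsilon_\pi:\pi(A)\to B(H_\pi)$, the universal representation of $\pi(A)$. The composition $\Upsilon_\pi\circ\pi$ is a unital $*$-representation of $A$, hence has the unique tight extension property; that is, Theorem \ref{T:locref}(i) holds for $\pi$. Since $A$ is nuclear, Theorem \ref{T:locref} yields (i)$\Rightarrow$(iii) of that theorem, so $\pi:A\to\pi(A)$ has the Korovkin rigidity property with respect to $S$. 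This proves (iii). \textbf{(iii) $\Rightarrow$ (ii):} The identity representation $\id:A\to A=\id(A)$ is a particular case, so (iii) specializes to (ii). \textbf{(ii) $\Rightarrow$ (i):} Apply Theorem \ref{T:locref} with $\pi=\id$: statement (ii) here is statement (iii) of Theorem \ref{T:locref} for $\pi=\id$, and since $A$ is nuclear this implies Theorem \ref{T:locref}(i), namely that $\Upsilon_{\id}\circ\id$ has the unique tight extension property. But $\Upsilon_{\id}\circ\id$ is just the universal representation of $A$, so by Lemma \ref{L:reptrick} every unital $*$-representation of $A$ has the unique tight extension property (every such representation is a weak-$*$-continuous homomorphic image of the universal one). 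By Theorem \ref{T:tightHR}, this gives (i).

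The one point requiring a little care—and the step I expect to be the main technical wrinkle—is the passage in (ii) $\Rightarrow$ (i) from ``the universal representation of $A$ has the unique tight extension property'' to ``every irreducible representation is a boundary representation.'' One route is via Lemma \ref{L:reptrick} to get the unique tight extension property for \emph{all} unital $*$-representations and then invoke Theorem \ref{T:tightHR}(ii)$\Rightarrow$(i); one should check that each unital $*$-representation $\sigma$ of $A$ indeed fits the hypotheses of Lemma \ref{L:reptrick} with $\pi$ the universal representation, i.e. that there is a weak-$*$ continuous surjective $*$-homomorphism $\pi(A)''\to\sigma(A)''$ intertwining the two—this is the standard universality of the universal representation and its extension to the bidual. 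Alternatively, one can apply Theorem \ref{T:locref}(iii)$\Rightarrow$(i) directly, since that implication already concludes with statement (i) of the present corollary; this is cleaner and avoids Lemma \ref{L:reptrick} altogether in this direction. I would present the argument using Theorem \ref{T:locref} directly for the $\Rightarrow$(i) step and reserve Lemma \ref{L:reptrick} only if a separate normalization is needed, keeping the proof to a few lines.

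\begin{proof}
Since $A$ is separable, Theorem \ref{T:tightHR} shows that (i) is equivalent to the statement that every unital $*$-representation of $A$ has the unique tight extension property with respect to $S$.

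Assume (i), and fix a unital $*$-representation $\pi:A\to B(H)$. Letting $\Upsilon_\pi$ denote the universal representation of $\pi(A)$, the composition $\Upsilon_\pi\circ\pi$ is a unital $*$-representation of $A$, and hence has the unique tight extension property with respect to $S$ by the previous paragraph. Thus statement (i) of Theorem \ref{T:locref} holds for $\pi$. Since $A$ is nuclear, Theorem \ref{T:locref} yields that $\pi:A\to\pi(A)$ has the Korovkin rigidity property with respect to $S$. As $\pi$ was arbitrary, (iii) follows.

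The implication (iii) $\Rightarrow$ (ii) is immediate upon taking $\pi$ to be the identity representation of $A$.

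Finally, assume (ii). Applying Theorem \ref{T:locref} with $\pi$ the identity representation of $A$, statement (iii) of that theorem holds, and since $A$ is nuclear we conclude that statement (i) of Theorem \ref{T:locref} holds as well, namely that $\Upsilon_{\id}\circ\id$ has the unique tight extension property with respect to $S$. Now $\Upsilon_{\id}\circ\id$ is precisely the universal representation of $A$, and for any unital $*$-representation $\sigma:A\to B(H_\sigma)$ there is a weak-$*$ continuous surjective $*$-homomorphism $\Omega:\Upsilon_{\id}(A)''\to \sigma(A)''$ with $\Omega\circ \Upsilon_{\id}=\sigma$, by the universal property of the universal representation. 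Lemma \ref{L:reptrick} therefore implies that $\sigma$ has the unique tight extension property with respect to $S$. Since $\sigma$ was arbitrary, Theorem \ref{T:tightHR} yields (i), completing the proof.
\end{proof}
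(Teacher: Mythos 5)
Your proof is correct and follows essentially the same route as the paper: both arguments chain Theorem \ref{T:tightHR}, Theorem \ref{T:locref} (in the nuclear case), and Lemma \ref{L:reptrick} applied to the universal representation, with only a cosmetic difference in how the cycle of implications is organized. Your write-up simply spells out the bridging steps (e.g.\ the weak-$*$ continuous surjection $\Upsilon_{\id}(A)''\to\sigma(A)''$) that the paper leaves implicit.
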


\begin{proof}
(iii) $\Rightarrow$ (ii): This is trivial.

(ii) $\Rightarrow$ (i): By Theorem \ref{T:locref}, we see that the universal representation of $A$ has the unique tight extension property with respect to $S$. In turn, by Lemma \ref{L:reptrick} we see that every unital $*$-representation of $A$ has the unique tight extension property with respect to $S$. Applying Theorem \ref{T:tightHR} we see that (i) holds.

(i) $\Rightarrow$ (iii): By Theorem \ref{T:tightHR}, we infer that every unital $*$-representation of $A$ has the unique tight extension property with respect to $S$. Hence, Theorem \ref{T:locref} implies that every unital $*$-representation $\pi:A\to \pi(A)$ has the Korovkin rigidity property with respect to $S$.
\end{proof}

A similar rigidity principle can be stated in the locally reflexive case, the details of which we leave to the interested reader.


\section{The strong Korovkin rigidity property}\label{S:NormK}

Upon comparing the classical Korovkin--\v Sa\v skin rigidity principle with our noncommutative counterpart (Corollary \ref{C:nuclearrigprinc}), one immediately notices a discrepancy. Indeed, the classical version  is concerned with approximations in the norm topology,  while in the noncommutative world we are dealing with the weak topology. In this section, we address this topological issue.

Let $A$ be a unital $\rC^*$-algebra and let $S\subset A$ be an operator system such that $\rC^*(S) = A$. We say that a unital $*$-representation $\pi: A\rightarrow B(H)$ has the \emph{strong Korovkin rigidity  property} with respect to $S$ if, whenever $\phi_\lambda:A\rightarrow\pi(A)$ is a net of unital completely positive maps satisfying
\[
\lim_\lambda \|\phi_\lambda(s)-\pi(s)\|=0 \quad \ \text{ for every } s\in S 
\]
it follows that
\[
\lim_\lambda \|\phi_\lambda(a)-\pi(a)\|=0 \quad \ \text{ for every } a\in A.
\]
We aim to find sufficient conditions for the strong Korovkin rigidity property to hold. We approach this question by investigating the dependence of this property on the representation. 

For instance, an elementary argument shows that if a $*$-representation $\pi$ of $A$ has the strong Korovkin rigidity property with respect to $S$, and if $\sigma$ is another $*$-representation with $\ker \sigma=\ker \pi$, then $\sigma$ also has this rigidity property.

Our next goal is to show that a more powerful  statement holds for certain well-behaved $\rC^*$-algebras. Recall that a unital $\rC^*$-algebra $A$ has the \emph{lifting property} if, given a surjective $*$-homomorphism $\Omega: B\rightarrow D$ between unital $\rC^*$-algebras and a unital completely positive map $\psi: A\rightarrow D$, there exists a unital completely positive map $\Psi:A\rightarrow B$ such that $\Omega\circ\Psi = \psi$. The class of $\rC^*$-algebras with the lifting property includes separable nuclear $\rC^*$-algebras \cite{choi1976completely} (see also \cite{pisier2020non} for a recent account on this topic).

We require the following technical fact, essentially due to Arveson.

\begin{lemma}\label{L:lift}
Let $A$ be a unital $\rC^*$-algebra with the lifting property. Let $\pi$ and $\sigma$ be two unital $*$-representations such that there is a unital surjective $*$-homomorphism $\Omega:\pi(A)\to \sigma(A)$ with $\Omega\circ \pi=\sigma$. Let $\psi:A\to \sigma(A)$ be a unital completely positive map, let $F\subset A$ be a finite set and let $\eps>0$. Then, there is a unital completely positive map $\Psi:A\to \pi(A)$ such that $\Omega\circ \Psi=\psi$ and
\[
\|\Psi(a)-\pi(a)\|\leq \| \psi(a)-\sigma(a)\|+\eps, \quad a\in F.
\]
\end{lemma}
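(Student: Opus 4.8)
The plan is to combine the lifting property of $A$ with a small perturbation argument that controls the distance to $\pi$, in the spirit of Arveson's original lifting trick from \cite{arveson1977notes}. First I would set up the right auxiliary surjection. Since $\Omega\circ\pi=\sigma$, the restriction $\Omega|_{\pi(A)}:\pi(A)\to\sigma(A)$ is a unital surjective $*$-homomorphism, and $\pi:A\to\pi(A)$ is a genuine lift of $\sigma=\Omega\circ\pi$. The naive approach — just lifting $\psi$ directly via the lifting property of $A$ to get $\Psi_0:A\to\pi(A)$ with $\Omega\circ\Psi_0=\psi$ — produces a unital completely positive map, but offers no control on $\|\Psi_0(a)-\pi(a)\|$; this is exactly the obstacle the lemma is designed to overcome.

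To get the norm estimate, the key idea is to lift not $\psi$ alone but a $2\times2$ amplified data set that remembers $\pi$. Consider the unital completely positive map $\Phi:A\to M_2(\sigma(A))$ given by $\Phi(a)=\operatorname{diag}(\psi(a),\sigma(a))$, or more cleverly a map into $M_2(\sigma(A))$ whose off-diagonal entries encode $\psi(a)-\sigma(a)$; since $A$ has the lifting property, so does $M_2(A)$-valued data, and one can lift $\Phi$ through the surjection $M_2(\Omega):M_2(\pi(A))\to M_2(\sigma(A))$ to a unital completely positive $\td\Phi:A\to M_2(\pi(A))$. Here I would be careful to arrange that the $(2,2)$-corner of $\td\Phi$ is exactly $\pi$: this can be enforced because $\pi$ itself is a lift of $\sigma$ in that corner, and one can compose with a conditional-expectation-type correction, or alternatively apply the lifting property to the restricted problem "lift $\psi$ subject to already knowing a lift $\pi$ of $\sigma$'' — formally, lift the difference or use that the set of lifts is an affine space over the unital completely positive lifts of the zero map $A\to\sigma(A)$ through $\ker\Omega|_{\pi(A)}$. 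Setting $\Psi$ to be the $(1,1)$-corner of $\td\Phi$ then gives a unital completely positive map with $\Omega\circ\Psi=\psi$, and the complete positivity of $\td\Phi$ forces its off-diagonal entries to be dominated (as operators, hence in norm) by the diagonal entries $\Psi(a)$ and $\pi(a)$; unwinding this with the standard $2\times2$ positivity criterion yields $\|\Psi(a)-\pi(a)\|\lesssim\|\psi(a)-\sigma(a)\|$.

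The factor $(1+\eps)$ and the restriction to a finite set $F$ enter because exact equality in the $2\times 2$ domination is generally unavailable: one must pass through an approximate version. Concretely, on the finite set $F$ I would choose the amplified target data to have off-diagonal entries $t\,(\psi(a)-\sigma(a))$ for a scalar $t<1$ close to $1$ (ensuring genuine positivity with a margin), lift, read off the corner $\Psi$, and then check that the resulting estimate degrades only by a factor that can be made smaller than $1+\eps$ by taking $t$ close enough to $1$ — with the dependence on $F$ coming from needing uniform control over the finitely many elements $a\in F$. The main obstacle, and the step deserving the most care, is precisely the bookkeeping in this perturbative $2\times2$ lifting: making sure the lift can be taken \emph{unital} with the prescribed $(2,2)$-corner equal to $\pi$ while simultaneously extracting the operator inequality that produces the norm bound. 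Once that positivity-with-a-margin construction is in place, the rest is a routine functional-calculus estimate of the type already carried out in the proof of Theorem \ref{T:locref}.
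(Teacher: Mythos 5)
Your proposal takes a genuinely different route from the paper, which proves the lemma by citing the argument of Arveson's Lemma 3.1 from his notes on extension theory: one first takes an arbitrary unital completely positive lift $\Psi_0$ of $\psi$ (this is the only place the lifting property enters) and then corrects it by the convex-combination trick $\Psi(a)=(1-e)^{1/2}\Psi_0(a)(1-e)^{1/2}+e^{1/2}\pi(a)e^{1/2}$, where $e$ is a far-out element of an approximate unit for $\ker\Omega$ that is quasicentral relative to the relevant (finite or separable) subset of $\pi(A)$. The estimate then follows because $\Omega(\Psi_0(a)-\pi(a))=\psi(a)-\sigma(a)$, because $\|(1-e)^{1/2}b(1-e)^{1/2}\|$ tends to the quotient norm $\|\Omega(b)\|$ along the approximate unit, and because quasicentrality makes the commutator errors with $e^{1/2}$ and $(1-e)^{1/2}$ negligible on the finite set $F$. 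Your $2\times 2$ amplification scheme avoids this tool entirely, but it has a genuine gap at precisely the step you yourself flag as delicate.

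Concretely, two things are missing. First, arranging the $(2,2)$-corner of the lifted map $\td\Phi$ to be \emph{exactly} $\pi$ is not something the lifting property gives you: the set of unital completely positive lifts of a given map is convex but is not an affine space over ``completely positive lifts of the zero map'' (the zero map is not unital, and differences of unital completely positive lifts need not be completely positive), and no conditional-expectation-type correction onto that corner is available in general; in fact, producing a lift of $\operatorname{diag}(\psi,\sigma)$ whose second corner is a prescribed lift $\pi$ of $\sigma$ is essentially a reformulation of the lemma itself, so this step cannot be waved through. Second, even granting such a lift, the positivity argument does not yield the estimate: the off-diagonal entries of $\td\Phi(a)$ are merely \emph{some} lifts of $t(\psi(a)-\sigma(a))$, determined only modulo $\ker\Omega$, so they carry no information about the quantity $\Psi(a)-\pi(a)$ that you need to bound; and the $2\times 2$ positivity criterion only dominates those off-diagonal entries by the diagonal entries $\Psi(a)$ and $\pi(a)$, which gives a bound of order $\|a\|$ rather than of order $\|\psi(a)-\sigma(a)\|$. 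Thus the asserted conclusion $\|\Psi(a)-\pi(a)\|\lesssim\|\psi(a)-\sigma(a)\|$ does not follow from the construction as described, and the $(1+\eps)$ bookkeeping in your last paragraph cannot repair it. The missing ingredient is some mechanism tying the new lift to the prescribed lift $\pi$ of $\sigma$, and the standard such mechanism is exactly the quasicentral approximate unit correction above.
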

\begin{proof}
This follows from the argument given in \cite[Lemma 3.1]{arveson1977notes}.
\end{proof}

We can now prove a change of representation result for the strong Korovkin rigidity property.

\begin{theorem}\label{T:lift}
Let $A$ be a unital $\rC^*$-algebra with the lifting property. Let $\pi$ and $\sigma$ be two unital $*$-representations such that $\ker\pi\subset \ker \sigma$. If $\pi$ has the strong Korovkin rigidity property with respect to $S$, then so does $\sigma$.
\end{theorem}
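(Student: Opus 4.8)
The plan is to transport a putative rigidity‑defeating net for $\sigma$ \emph{upward} to $\pi$ via the lifting Lemma~\ref{L:lift}, invoke the strong Korovkin rigidity property of $\pi$, and then push the resulting norm convergence back down through a quotient map. First I would note that since $\ker\pi\subseteq\ker\sigma$, the representation $\sigma$ factors through $A/\ker\pi\cong\pi(A)$, so there is a unital surjective $*$-homomorphism $\Omega:\pi(A)\to\sigma(A)$ with $\Omega\circ\pi=\sigma$; being a $*$-homomorphism, $\Omega$ is contractive. This is precisely the data needed to apply Lemma~\ref{L:lift}.

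Now suppose $\phi_\lambda:A\to\sigma(A)$, $\lambda\in\Lambda$, is a net of unital completely positive maps with $\lim_\lambda\|\phi_\lambda(s)-\sigma(s)\|=0$ for every $s\in S$; the goal is $\lim_\lambda\|\phi_\lambda(a)-\sigma(a)\|=0$ for every $a\in A$. Since Lemma~\ref{L:lift} only yields lifts with norm control on a prescribed finite set, I would refine the index set: let $\mathcal{P}$ be the collection of pairs $(F,n)$ with $F\subseteq A$ finite and $n\in\bN$, directed by $(F,n)\leq(F',n')$ iff $F\subseteq F'$ and $n\leq n'$, and set $\Lambda'=\Lambda\times\mathcal{P}$ with the product order. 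For each $\lambda'=(\lambda,(F,n))$, Lemma~\ref{L:lift} applied to $\phi_\lambda$, the finite set $F$, and $\eps=1/n$ produces a unital completely positive map $\Phi_{\lambda'}:A\to\pi(A)$ with $\Omega\circ\Phi_{\lambda'}=\phi_\lambda$ and $\|\Phi_{\lambda'}(a)-\pi(a)\|\leq(1+1/n)\|\phi_\lambda(a)-\sigma(a)\|$ for all $a\in F$.

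The key verification is that $(\Phi_{\lambda'})_{\lambda'\in\Lambda'}$ approximates $\pi$ on $S$ in norm: given $s\in S$ and $\delta>0$, pick $\mu\in\Lambda$ with $\|\phi_\lambda(s)-\sigma(s)\|<\delta/2$ for $\lambda\geq\mu$; then for $\lambda'=(\lambda,(F,n))\geq(\mu,(\{s\},2))$ one has $s\in F$ and $n\geq2$, whence $\|\Phi_{\lambda'}(s)-\pi(s)\|\leq\frac{3}{2}\cdot\frac{\delta}{2}<\delta$. By the strong Korovkin rigidity property of $\pi$, it follows that $\lim_{\lambda'}\|\Phi_{\lambda'}(a)-\pi(a)\|=0$ for every $a\in A$. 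To descend, observe that contractivity of $\Omega$ together with $\Omega\circ\Phi_{\lambda'}=\phi_\lambda$ and $\Omega\circ\pi=\sigma$ gives $\|\phi_\lambda(a)-\sigma(a)\|\leq\|\Phi_{\lambda'}(a)-\pi(a)\|$ for every $\lambda'=(\lambda,(F,n))$. Then for fixed $a\in A$ and $\delta>0$, choosing $\mu'=(\mu,(F_0,n_0))$ so that $\|\Phi_{\lambda'}(a)-\pi(a)\|<\delta$ whenever $\lambda'\geq\mu'$, any $\lambda\geq\mu$ admits the index $\lambda'=(\lambda,(F_0\cup\{a\},n_0))\geq\mu'$, forcing $\|\phi_\lambda(a)-\sigma(a)\|<\delta$. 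This yields the strong Korovkin rigidity property for $\sigma$.

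The only real obstacle is bookkeeping: Lemma~\ref{L:lift} cannot lift the net ``on the nose'' because it controls only finitely many elements at once, so one must enlarge the directed set and let the tolerance $1/n$ shrink, arranged so that (i) the lifted net still approximates $\pi$ uniformly on each individual $s\in S$ and (ii) norm convergence on $A$ can afterward be transported back to the original net indexed by $\Lambda$. Once the directed set $\Lambda\times\mathcal{P}$ is set up correctly, the rest---factoring $\sigma$ through $\pi(A)$ and using that $\Omega$ is contractive---is immediate.
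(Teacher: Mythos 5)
Your proposal is correct and follows essentially the same route as the paper: factor $\sigma$ through $\pi(A)$ via the surjection $\Omega$, enlarge the index set to incorporate finite sets and shrinking tolerances so Lemma~\ref{L:lift} can be applied at each stage, invoke the strong Korovkin rigidity of $\pi$ for the lifted net, and push norm convergence back down through the contraction $\Omega$, finally translating convergence over the enlarged directed set back to the original net. The paper's proof uses triples $(\lambda,F,\eps)$ where you use $\Lambda\times\mathcal{P}$ with tolerances $1/n$, a purely cosmetic difference.
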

\begin{proof}

Let $\Lambda$ be a directed set, and let $\psi_\lambda:A\to \sigma(A)$ be a net of unital completely positive maps such that
\[
\lim_{\lambda}\|\psi_\lambda(s)-\sigma(s)\|=0, \quad s\in S.
\]
Let $\Gamma$ be the set of triples $(\lambda, F,\eps)$ where $\lambda\in \Lambda$, $F\subset A$ is finite and $\eps>0$. 
By assumption, there is a unital surjective $*$-homomorphism $\Omega:\pi(A)\to \sigma(A)$ such that $\Omega\circ\pi=\sigma$. Thus, for each $\gamma=(\lambda,F,\eps)\in \Gamma$, by virtue of Lemma \ref{L:lift} there is a unital completely positive map $\Psi_\gamma:A\to \pi(A)$ such that 
$\Omega\circ \Psi_\gamma =\psi_\lambda$ and
\[
\|\Psi_\gamma (a)-\pi(a)\|\leq \| \psi_\lambda (a)-\sigma(a)\|+\eps, \quad a\in F.
\]
It is easily seen then that 
\[
\lim_\gamma \|\Psi_\gamma(s)-\pi(s)\|=0, \quad s\in S.
\]
Since $\pi$ is assumed to have the strong Korovkin rigidity property, we infer that
\[
\lim_\gamma \|\Psi_\gamma(a)-\pi(a)\|=0, \quad a\in A.
\]
Applying $\Omega$, we find
\[
\lim_\gamma \|(\Omega\circ\Psi_\gamma)(a)-\sigma(a)\|=0, \quad a\in A.
\]
Finally, fix $a\in A$ and $\eps>0$. There is $\gamma_0=(\lambda_0,F_0,\eps_0)\in \Gamma$ such that 
\[
 \|(\Omega\circ\Psi_\gamma)(a)-\sigma(a)\|<\eps
\]
if $\gamma\geq \gamma_0$. Given  $\lambda\geq \lambda_0$, we put $\gamma=(\lambda,F_0,\eps_0)\in \Gamma$. Then, $\gamma\geq \gamma_0$ whence
\[
\|\psi_{\lambda}(a)-\sigma(a)\|=\|(\Omega\circ\Psi_\gamma)(a)-\sigma(a)\|<\eps.
\]
This shows that 
\[
\lim_{\lambda}\|\psi_\lambda(a)-\sigma(a)\|=0, \quad a\in A
\]
as desired.
\end{proof}

If the lifting property is replaced by the (formally weaker) local lifting property, then the previous argument can be adapted and a similar statement to Theorem \ref{T:lift} can be given, where the strong Korovkin rigidity property is replaced by the obvious strong variant of the notion of approximation studied in Subsection \ref{SS:ncKS}. 
We leave the details to the interested reader.   We also mention here that it is not known whether these two lifting properties are equivalent, and in fact this problem is at the heart of the Connes--Kirchberg conjecture \cite{pisier2023lifting}.

Next, we show how to connect the strong Korovkin rigidity property to the unique tight extension property, for a subclass of the nuclear $\rC^*$-algebras.

Recall that a $\rC^*$-algebra is said to be \emph{homogeneous} when all its irreducible $*$-representations have the same finite dimension. For this class, we can connect the strong Korovkin rigidity property to the unique tight extension property, by proving the following variation on Theorem \ref{T:locref}.

\begin{theorem}\label{T:normhomo}
Let $A$ be a unital  $\rC^*$-algebra. Let $\pi:A\to B(H)$ be a unital $*$-representation and let $\Upsilon_\pi$ denote the universal representation of $\pi(A)$. If $\pi(A)$ is homogeneous and $\Upsilon_\pi\circ \pi$ has the unique tight extension property, then $\pi$ has the strong Korovkin rigidity property with respect to $S$.
\end{theorem}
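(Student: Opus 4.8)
The plan is to mimic the implication (iii)$\Rightarrow$(i) (or rather its contrapositive in the form (i)$\Rightarrow$(iii)) from Theorem \ref{T:locref}, but carefully upgrading every weak-$*$ convergence statement to a norm convergence statement, exploiting homogeneity of $\pi(A)$. Suppose $\phi_\lambda:A\to\pi(A)$ is a net of unital completely positive maps with $\lim_\lambda\|\phi_\lambda(s)-\pi(s)\|=0$ for every $s\in S$; we must show the same holds for every $a\in A$. Since $\widehat{\Upsilon_\pi}:\pi(A)^{**}\to(\Upsilon_\pi\circ\pi)(A)''$ is an isometric weak-$*$ continuous $*$-isomorphism, it suffices to control $\|(\Upsilon_\pi\circ\phi_\lambda)(a)-(\Upsilon_\pi\circ\pi)(a)\|$. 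First I would extend each $\Upsilon_\pi\circ\phi_\lambda$ to a unital completely positive map $\Phi_\lambda:A\to B(H_\pi)$ by Arveson's extension theorem, pass to a cluster point $\Phi$ in the pointwise weak-$*$ topology, and argue exactly as in Theorem \ref{T:locref} that $\Phi$ agrees with $\Upsilon_\pi\circ\pi$ on $S$ and is a tight extension; the unique tight extension hypothesis then gives $\Phi=\Upsilon_\pi\circ\pi$, hence $\Upsilon_\pi\circ\phi_\lambda\to\Upsilon_\pi\circ\pi$ in the pointwise weak-$*$ topology of $(\Upsilon_\pi\circ\pi)(A)''$, equivalently $\phi_\lambda\to\pi$ pointwise weakly in $\pi(A)$.

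The core of the argument is then the promotion of pointwise weak convergence to pointwise norm convergence, and this is where homogeneity enters. Write $n$ for the common dimension of the irreducible representations of $\pi(A)$. A homogeneous $\rC^*$-algebra of degree $n$ is, by the Fell/Tomiyama--Takesaki structure theory, $*$-isomorphic to the section algebra of a locally trivial $\bM_n$-bundle over its (Hausdorff) primitive ideal space $X=\operatorname{Prim}\pi(A)$; equivalently $\pi(A)$ embeds in $\rC(X)\otimes\bM_n$ in a way that is locally onto each fiber. The key point is that on such an algebra the weak topology and the norm topology induce the same convergence on bounded nets that converge pointwise-in-fiber, because in each fiber $\bM_n$ — being finite dimensional — weak and norm convergence coincide, and one can then use a compactness/partition-of-unity argument over $X$ together with the uniform bound $\|\phi_\lambda(a)\|\le\|a\|$. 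More precisely, I would show: if a bounded net $(x_\lambda)$ in $\pi(A)$ converges weakly to $x\in\pi(A)$, then for each point $x_0\in X$ the images in the fiber $\bM_n$ converge in norm; since the net is uniformly bounded and $X$ is compact, an $\eps/3$ argument covering $X$ by finitely many trivializing neighborhoods upgrades this to $\|x_\lambda-x\|\to0$. Applying this with $x_\lambda=\phi_\lambda(a)$ and $x=\pi(a)$ finishes the proof.

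I expect the main obstacle to be the last structural step: justifying cleanly that weak convergence of a bounded net in a homogeneous $\rC^*$-algebra forces norm convergence. One has to be careful that "weak" here means weak as a Banach space (i.e. against $\pi(A)^*$), and to make the fiberwise-to-global passage rigorous one needs either the explicit bundle description of homogeneous $\rC^*$-algebras or, alternatively, an argument via the fact that all irreducible representations have dimension $\le n$ so that $\pi(A)$ is $n$-subhomogeneous and every state is a limit of convex combinations of vector states coming from representations of dimension $\le n$. A slick way to avoid bundle technology is to invoke directly that for subhomogeneous $\rC^*$-algebras the identity map is ``completely bounded weak-to-norm on bounded sets'' because the matrix norm $\|x\|=\sup\{\|\rho(x)\|:\rho\in\operatorname{Irr}(\pi(A))\}$ and each $\rho$ has range in $\bM_n$; continuity of $\rho\mapsto\rho(x)$ (in the appropriate topology on $\operatorname{Irr}$, which is well-behaved in the homogeneous case) plus compactness does the rest. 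If this topological step proves delicate to write carefully, the fallback is to cite the structure theorem for homogeneous $\rC^*$-algebras and carry out the partition-of-unity estimate explicitly; either way the rest of the proof is a routine adaptation of Theorem \ref{T:locref}.
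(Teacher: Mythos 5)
Your first step is fine: extending $\Upsilon_\pi\circ\phi_\lambda$, taking a pointwise weak-$*$ cluster point, and invoking the unique tight extension property does show that $\phi_\lambda(a)\to\pi(a)$ weakly in $\pi(A)$ for every $a\in A$ (this is essentially the implication (i)$\Rightarrow$(ii) of Theorem \ref{T:locref}). The gap is in the second step, and it is not merely a delicate point of writing: the principle you rely on --- that a bounded net in a homogeneous $\rC^*$-algebra which converges weakly to an element of the algebra must converge in norm --- is false. Already for the degree-one homogeneous algebra $\rC([0,1])$, a moving bump (say $f_n$ a tent function of height $1$ supported on $[\tfrac{1}{n+1},\tfrac{1}{n}]$) is bounded and weakly null, since $\int f_n\,d\mu\le\mu([\tfrac{1}{n+1},\tfrac{1}{n}])\to 0$ for every finite Borel measure $\mu$, yet $\|f_n\|=1$ for all $n$. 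Fiberwise, weak convergence does give norm convergence in each $\bM_n$-fiber (finite dimensionality), but upgrading pointwise-in-fiber convergence to uniform convergence over the primitive ideal space requires some equicontinuity of the net, which neither compactness of the spectrum nor a partition of unity supplies; your fallback argument via $\|x\|=\sup_\rho\|\rho(x)\|$ fails for the same reason. So weak convergence of $\phi_\lambda(a)$, which is all your use of the hypothesis yields, genuinely does not suffice.

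The paper's proof uses homogeneity and the unique tight extension property in a different, and essentially unavoidable, way: for a self-adjoint $a$ and a subnet, it chooses for each index an irreducible representation $\sigma_\lambda$ of $\pi(A)$ on a \emph{fixed} $d$-dimensional space $F$ with $\|\sigma_\lambda(\psi_\lambda(a)-\pi(a))\|=\|\psi_\lambda(a)-\pi(a)\|$, passes to a cluster point $\sigma$ of the net $(\sigma_\lambda)$ in the pointwise norm topology (possible since $B(F)$ is finite-dimensional; homogeneity forces $\sigma$ to be irreducible), and then applies the unique tight extension hypothesis a \emph{second} time, not to the original maps but to the modified unital completely positive maps $\Psi_\mu=\Omega\circ\sigma_{\lambda_\mu}\circ\psi_{\lambda_\mu}\oplus\pi(\cdot)(1-z)$, where $z$ is the support projection of $\widehat\sigma$ and $\Omega:B(F)\to\pi(A)^{**}z$ the induced isomorphism. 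This yields $\sigma_{\lambda_\mu}(\psi_{\lambda_\mu}(a))\to\sigma(\pi(a))$ in the norm of $B(F)$, and the norming property of the $\sigma_{\lambda_\mu}$ then converts this fiberwise statement into $\|\psi_{\lambda_\mu}(a)-\pi(a)\|\to 0$. In other words, the weak-to-norm passage is achieved only after composing with a net of norming finite-dimensional representations and re-using the rigidity hypothesis there; to repair your argument you would need to incorporate a device of this kind rather than a general topological upgrade.
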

\begin{proof}
Since $\pi(A)$ is homogeneous, there is a positive integer $d$ such that every irreducible $*$-representation of $\pi(A)$ is $d$-dimensional. Fix once and for all a Hilbert space $F$ of dimension $d$. Then, every irreducible $*$-representation of $\pi(A)$ is unitarily equivalent to an irreducible $*$-representation on $F$. 

Let $\psi_\gamma:A\to \pi(A)$ be a net of unital completely positive maps such that 
\[
\lim_\gamma\|\psi_\gamma(s)-\pi(s)\|=0, \quad s\in S.
\]
Fix a self-adjoint element $a\in A$. The goal is to show that the net $(\psi_\gamma(a))$ converges to $\pi(a)$ in norm. Equivalently, we must show that any subnet of $(\psi_\gamma(a))$ has a further subnet converging in norm to $\pi(a)$.

Fix  a subnet  $(\psi_\lambda(a))$ of $(\psi_\gamma(a))$. For each $\lambda$, by the first paragraph of the proof we may find an irreducible $*$-representation $\sigma_\lambda:\pi(A)\to B(F)$ such that 
\begin{equation}\label{Eq:normrep}
\|\sigma_\lambda(\psi_\lambda(a)-\pi(a))\|=\|\psi_\lambda(a)-\pi(a)\|.
\end{equation}
Let $\sigma:\pi(A)\to B(F)$ be a cluster point of the net $(\sigma_\lambda)$ in the pointwise norm topology, so that there is a subnet $(\sigma_{\lambda_\mu})$ satisfying
\begin{equation}\label{Eq:conv1}
\lim_\mu\|\sigma(\pi(b))-\sigma_{\lambda_\mu}(\pi(b))\|=0, \quad b\in A.
\end{equation}
Then, $\sigma$ is a unital $*$-representation of $\pi(A)$, and it must necessarily be irreducible since $\pi(A)$ is homogeneous.

We claim that 
\begin{equation}\label{Eq:conv2}
\lim_\mu \|\sigma_{\lambda_\mu}(\psi_{\lambda_\mu}(b))-\sigma(\pi(b))\|=0, \quad b\in A.
\end{equation}
To see this, consider the weak-$*$ continuous $*$-representation $\widehat\sigma:\pi(A)^{**}\to B(F)$ and let $z\in \pi(A)^{**}$ be the central projection such that $\ker \widehat\sigma=\pi(A)^{**}(1-z)$. Since $F$ is finite-dimensional and $\sigma$ is irreducible, we have $B(F)=\sigma(\pi(A))$ and there is a weak-$*$ homeomorphic $*$-isomorphism $\Omega:B(F)\to \pi(A)^{**}z$ such that $\Omega(\sigma(\pi(a)))=\pi(a)z$ for each $a\in A$.  For each $\mu$, we define a unital completely positive map $\Psi_\mu:A\to \pi(A)^{**}$ as
\[
\Psi_\mu(b)=(\Omega\circ \sigma_{\lambda_\mu} \circ \psi_{\lambda_\mu})(b)\oplus \pi(b)(1-z)
\]
for every $b\in A$. By construction, for each $s\in S$ we see that
\[
\lim_\mu \|\sigma_{\lambda_\mu}(\psi_{\lambda_\mu}(s))-\sigma(\pi(s))\|=0
\]
and so
\[
\lim_\mu \|\Psi_\mu(s)-\pi(s)\|=0.
\]
Let $\Phi:A\to \Upsilon_\pi(A)''$ be a cluster point of the net $(\Upsilon_\pi\circ \Psi_\mu)$ in the pointwise weak-$*$ topology. Then, $\Phi$ is a unital completely positive map agreeing with $\Upsilon_\pi\circ \pi$ on $S$. By the unique tight extension property of $\Upsilon_\pi\circ \pi$, we infer that $\Phi=\Upsilon_\pi\circ \pi$ on $A$. Applying $\widehat{\Upsilon_\pi}^{-1}$, we find that the net $(\Psi_\mu(b))$ converges to $\pi(b)$ in the weak-$*$ topology of $\pi(A)^{**}$ for every $b\in A$. In particular,  for $b\in A$ the net $((\Omega\circ \sigma_{\lambda_\mu} \circ \psi_{\lambda_\mu})(b))$ converges to $\pi(b)z$  in the weak-$*$ topology of $\pi(A)^{**}$ and, in turn, by applying $\Omega^{-1}$ we see that $((\sigma_{\lambda_\mu} \circ \psi_{\lambda_\mu})(b))$ converges to $\sigma(\pi(b))$  in the norm topology of $B(F)$ (since $F$ is finite-dimensional). This establishes the claim.

Finally, upon combining \eqref{Eq:conv1} and \eqref{Eq:conv2}, we find 
\[
\lim_\mu \|\sigma_{\lambda_\mu}(\pi(a)-\psi_{\lambda_\mu}(a))\|=0.
\]
By \eqref{Eq:normrep}, this implies 
\[
\lim_\mu \|\pi(a)-\psi_{\lambda_\mu}(a)\|=0
\]
as desired.

\end{proof}

As a simple application, we give a criterion that guarantees the strong Korovkin rigidity property for representations with homogeneous ranges. This will be of use later.

\begin{corollary}\label{C:homogrange}
Let $A$ be a separable unital  $\rC^*$-algebra and let $S\subset A$ be an operator system such that $\rC^*(S)=A$. Let $\pi:A\to B(H)$ be a unital $*$-representation such that $\pi(A)$ is homogeneous. If all irreducible $*$-representations of $A$ are boundary representations for $S$, then $\pi$ has the strong Korovkin rigidity property with respect to $S$.
\end{corollary}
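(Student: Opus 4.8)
The plan is to deduce this corollary directly from the two structural results already in hand, namely Theorem \ref{T:tightHR} and Theorem \ref{T:normhomo}, so that essentially no new work is required.

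First I would exploit separability. Since $A$ is separable and, by hypothesis, every irreducible $*$-representation of $A$ is a boundary representation for $S$, the implication ${\rm (i)}\Rightarrow{\rm (ii)}$ of Theorem \ref{T:tightHR} applies and tells us that \emph{every} unital $*$-representation of $A$ has the unique tight extension property with respect to $S$. The one small point to check here is that the representation I want to feed into the next step really is a representation of $A$: writing $\Upsilon_\pi$ for the universal representation of $\pi(A)$, the composition $\Upsilon_\pi\circ\pi:A\to B(H_\pi)$ is a unital $*$-representation of $A$ (not merely of $\pi(A)$), so it falls within the scope of Theorem \ref{T:tightHR}. Hence $\Upsilon_\pi\circ\pi$ has the unique tight extension property with respect to $S$.

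Next I would invoke Theorem \ref{T:normhomo}. Its hypotheses are exactly: $\pi(A)$ homogeneous — which we are assuming — and $\Upsilon_\pi\circ\pi$ has the unique tight extension property with respect to $S$ — which was just established. The conclusion of Theorem \ref{T:normhomo} is precisely that $\pi$ has the strong Korovkin rigidity property with respect to $S$, which is what we want. This completes the proof.

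In short, the argument is a two-line application of previous results, and there is no genuine obstacle: all the substance has already been extracted into Theorems \ref{T:tightHR} and \ref{T:normhomo}. The only thing worth stating carefully is the bookkeeping around $\Upsilon_\pi\circ\pi$ being a representation of $A$ and the fact that separability of $A$ is exactly what licenses the use of the harder (i)$\Rightarrow$(ii) direction of Theorem \ref{T:tightHR}.

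\begin{proof}
Since $A$ is separable and every irreducible $*$-representation of $A$ is a boundary representation for $S$, Theorem \ref{T:tightHR} implies that every unital $*$-representation of $A$ has the unique tight extension property with respect to $S$. In particular, writing $\Upsilon_\pi$ for the universal representation of $\pi(A)$, the unital $*$-representation $\Upsilon_\pi\circ\pi:A\to B(H_\pi)$ of $A$ has the unique tight extension property with respect to $S$. As $\pi(A)$ is homogeneous by hypothesis, Theorem \ref{T:normhomo} now yields that $\pi$ has the strong Korovkin rigidity property with respect to $S$.
\end{proof}
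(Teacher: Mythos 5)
Your proof is correct and is essentially identical to the paper's own argument: both deduce the unique tight extension property of $\Upsilon_\pi\circ\pi$ from Theorem \ref{T:tightHR} (using separability and the boundary representation hypothesis) and then apply Theorem \ref{T:normhomo} with the homogeneity of $\pi(A)$. The extra remark that $\Upsilon_\pi\circ\pi$ is a representation of $A$ is a fine bookkeeping point but not a deviation from the paper's route.
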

\begin{proof}
By virtue of Theorem \ref{T:normhomo}, it suffices to check that $\Upsilon_\pi\circ \pi$ has the unique tight extension property. This follows from Theorem \ref{T:tightHR}.
\end{proof}

We can now give a strong version of the noncommutative Korovkin--\v Sa\v skin rigidity principle, where we replace the commutativity condition on the algebra by homogeneity.

\begin{corollary}\label{C:KShomo}
Let $A$ be a separable, homogeneous, unital $\rC^*$-algebra and let $S\subset A$ be an operator system such that $\rC^*(S)=A$. Then, the following statements are equivalent.
\begin{enumerate}[{\rm (i)}]
\item Every irreducible $*$-representation of $A$ is a boundary representation for $S$.
\item The identity representation of $A$ has the strong Korovkin rigidity property with respect to $S$.
\item Every unital $*$-representation $\pi:A\to\pi(A)$ has the strong Korovkin rigidity property with respect to $S$.
\end{enumerate}
\end{corollary}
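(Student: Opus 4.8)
The plan is to close the cycle of implications $\mathrm{(i)}\Rightarrow\mathrm{(iii)}\Rightarrow\mathrm{(ii)}\Rightarrow\mathrm{(i)}$, drawing almost entirely on results already in place, after recording two elementary features of a homogeneous $\rC^*$-algebra $A$. First, $A$ is subhomogeneous, hence type~I, hence nuclear; being separable as well, it has the lifting property by \cite{choi1976completely}, which is precisely what is needed to invoke Theorem~\ref{T:lift}. Second, for \emph{any} unital $*$-representation $\pi:A\to B(H)$, the range $\pi(A)\cong A/\ker\pi$ is again homogeneous of the same degree $d$: each irreducible $*$-representation of the quotient $A/\ker\pi$ is, in particular, an irreducible $*$-representation of $A$, hence $d$-dimensional, and $A/\ker\pi$ is unital and nonzero so it does have irreducible representations.

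Granting this, $\mathrm{(i)}\Rightarrow\mathrm{(iii)}$ is immediate: given an arbitrary unital $*$-representation $\pi:A\to\pi(A)$, the algebra $\pi(A)$ is homogeneous by the second observation, so Corollary~\ref{C:homogrange} applies verbatim and shows that $\pi$ has the strong Korovkin rigidity property with respect to $S$. Likewise $\mathrm{(iii)}\Rightarrow\mathrm{(ii)}$ needs nothing, since the identity representation satisfies $\id(A)=A$ and is therefore one of the representations quantified over in $\mathrm{(iii)}$.

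The remaining implication $\mathrm{(ii)}\Rightarrow\mathrm{(i)}$ is where the actual work sits, and I would argue by contradiction. Suppose some irreducible $*$-representation $\pi_0:A\to B(H_0)$ is not a boundary representation for $S$. Homogeneity forces $H_0$ to be finite-dimensional, and irreducibility then gives $\pi_0(A)=B(H_0)$; consequently, any unital completely positive map $A\to B(H_0)$ witnessing the failure of the unique extension property is automatically a map into $\pi_0(A)$. Thus there is a unital completely positive $\psi_0:A\to\pi_0(A)$ with $\psi_0|_S=\pi_0|_S$ and $\psi_0\neq\pi_0$, and the constant net $\phi_\lambda\equiv\psi_0$ converges to $\pi_0$ uniformly on $S$ but not on all of $A$, so $\pi_0$ fails the strong Korovkin rigidity property. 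On the other hand, assuming $\mathrm{(ii)}$, the identity representation has that property, and since $\ker\id=\{0\}\subset\ker\pi_0$ and $A$ has the lifting property, Theorem~\ref{T:lift} transfers it to $\pi_0$ --- a contradiction. Hence every irreducible $*$-representation of $A$ is a boundary representation for $S$.

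I do not expect a genuine obstacle in this argument: the analytic content has already been absorbed into Corollary~\ref{C:homogrange} (which itself rests on Theorems~\ref{T:tightHR} and~\ref{T:normhomo}) and into Theorem~\ref{T:lift} (Arveson's lifting trick), so what remains is bookkeeping. The one point deserving a line of care is that ``the identity representation has the strong Korovkin rigidity property'' is applied through an arbitrary faithful concrete realization of $A$; this is harmless because the property depends only on the $\rC^*$-algebra $\pi(A)$ together with the map $A\to\pi(A)$, and for a faithful representation the latter is simply an isomorphism, so Theorem~\ref{T:lift} may be applied with $\pi$ the identity representation regardless of how $A$ is represented.
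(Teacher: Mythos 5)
Your proposal is correct and uses essentially the same approach as the paper: the identical three ingredients appear, namely Corollary \ref{C:homogrange} for ${\rm (i)}\Rightarrow{\rm (iii)}$, the constant-net argument at a (necessarily finite-dimensional, surjective onto $B(H_0)$) irreducible representation, and Theorem \ref{T:lift} combined with nuclearity of homogeneous $\rC^*$-algebras and the Choi--Effros lifting property. The only difference is bookkeeping: you merge the paper's ${\rm (ii)}\Rightarrow{\rm (iii)}$ and ${\rm (iii)}\Rightarrow{\rm (i)}$ into a single implication ${\rm (ii)}\Rightarrow{\rm (i)}$ phrased as a contradiction, which changes nothing of substance.
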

\begin{proof}
{\rm (i)}$\Rightarrow${\rm (iii)}: Since $A$ is homogeneous, so is $\pi(A)$ for every $*$-representation $\pi$. Hence, the desired conclusion follows from Corollary \ref{C:homogrange}.

{\rm (iii)}$\Rightarrow${\rm (i)}: Let $\pi:A\to B(H)$ be an irreducible $*$-representation and let $\psi:A\to B(H)$ be a unital completely positive map agreeing with $\pi$ on $S$. Because $A$ is homogeneous, $H$ must be finite-dimensional whence  $\pi(A)=B(H)$. The strong Korovkin rigidity property applied to the constant net $\psi$ yields that $\pi=\psi$ on $A$. Hence, $\pi$ is a boundary representation for $S$.

{\rm (iii)}$\Rightarrow${\rm (ii)}: Obvious.

{\rm (ii)}$\Rightarrow${\rm (iii)}: This follows immediately from Theorem \ref{T:lift}, seeing as homogeneous $\rC^*$-algebras are nuclear \cite[Proposition 2.7.7]{brown2008textrm} and hence have the lifting property.
\end{proof}

Combining the previous result with Corollary \ref{C:nuclearrigprinc}, we see that the Korovkin rigidity propertiy is equivalent to its strong variant for the identity representation of a separable homogeneous $\rC^*$-algebras. We do not know whether this equivalence is valid for general separable nuclear $\rC^*$-algebras.



\section{Rigidity of the Bilich--Dor-On counterexample}

We close the paper by analyzing the recent counterexample to Arveson's hyperrigidity conjecture by Bilich and Dor-On through the lens of our main results. 

Let $B\subset B(H)$ be the norm-closed unital subalgebra constructed in \cite{bilich2024arveson}, and let $S\subset B(H)$ be the operator system generated by $B$. For our purposes, the relevant features of the construction are the following.
\begin{enumerate}
\item The Hilbert space $H$ is separable.
\item The $\rC^*$-algebra $\rC^*(S)$ contains the ideal $\fK$  of compact operators on $H$, and $\rC^*(S)/\fK$ is commutative. In particular, $\rC^*(S)$ is nuclear.
\item All irreducible $*$-representations of $\rC^*(S)$ are boundary representations for $S$.
\item There is a separable Hilbert space $H'$ and a unital $*$-representation $\pi:\rC^*(S)\to B(H')$ that does not have the unique extension property with respect to $S$ and such that $\fK\subset \ker \pi$.
\end{enumerate}
We shall refer to $S$ and $\pi$ above as the \emph{Bilich--Dor-On operator system} and \emph{Bilich--Dor-On representation}, respectively.

The operator system $S$ does not display the kind of rigidity behaviour that Arveson's conjecture predicted. One is then left, as pointed out by the authors of \cite{bilich2024arveson}, to wonder if $S$ enjoys some other type of rigidity. The following provides an answer to this question.

\begin{theorem}\label{T:BD}
Let $S$ be the Bilich--Dor-On operator system and let $\pi$ be the Bilich--Dor-On representation. Then, the following statements hold.
\begin{enumerate}[\rm (i)]
\item Every unital $*$-representation of $\rC^*(S)$ has the unique tight extension property with respect to $S$.
\item Every unital $*$-representation of $\rC^*(S)$ has the  Korovkin rigidity property with respect to $S$.
\item The representation $\pi$ has the strong Korovkin rigidity property.
\end{enumerate}
\end{theorem}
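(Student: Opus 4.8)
The plan is to obtain all three statements as direct consequences of the machinery established earlier, once two features of the construction are recorded in the right form. First, $S$ is separable --- this is part of the data, since \cite{bilich2024arveson} produces a counterexample to the hyperrigidity conjecture as quoted in the introduction, which concerns separable operator systems --- so that $\rC^*(S)$ is a separable unital $\rC^*$-algebra, and by feature (2) it is moreover nuclear. Second, by feature (3), every irreducible $*$-representation of $\rC^*(S)$ is a boundary representation for $S$. These are precisely the hypotheses under which the relevant results were proved.

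With this in hand, I would settle (i) by invoking Theorem \ref{T:tightHR}: its implication (i)$\Rightarrow$(ii), valid because $\rC^*(S)$ is separable, gives at once that every unital $*$-representation of $\rC^*(S)$ has the unique tight extension property with respect to $S$. Similarly, (ii) follows from Corollary \ref{C:nuclearrigprinc}, whose standing hypotheses (separable, nuclear) are met and whose statement (i) holds; its statement (iii) then yields that every unital $*$-representation $\pi:\rC^*(S)\to\pi(\rC^*(S))$ has the Korovkin rigidity property with respect to $S$.

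Part (iii) is the only one needing an extra --- though elementary --- observation, and this is where feature (4) enters. Since $\fK\subset\ker\pi$, the representation $\pi$ factors through $\rC^*(S)/\fK$, which is commutative by feature (2); consequently $\pi(\rC^*(S))$, being a $*$-homomorphic image of a commutative $\rC^*$-algebra, is itself commutative, hence homogeneous (of degree one). I would then apply Corollary \ref{C:homogrange} to $\pi$ --- using feature (3) once more --- to conclude that $\pi$ has the strong Korovkin rigidity property with respect to $S$. I do not expect any genuine obstacle: the substance of the theorem is just to display the abstract rigidity notions at work in the concrete Bilich--Dor-On example. The point worth emphasizing is that there is no conflict between the failure of the unique extension property for $\pi$ (feature (4)) and part (iii): the strong Korovkin rigidity property constrains only nets of unital completely positive maps taking values in the small commutative algebra $\pi(\rC^*(S))$, whereas the unique extension property concerns arbitrary unital completely positive extensions into all of $B(H')$.
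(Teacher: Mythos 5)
Your proposal is correct and follows essentially the same route as the paper: part (i) from Theorem \ref{T:tightHR}, part (ii) from Corollary \ref{C:nuclearrigprinc} using nuclearity and separability, and part (iii) from Corollary \ref{C:homogrange} after noting that $\pi(\rC^*(S))$ is commutative (hence homogeneous) because $\fK\subset\ker\pi$ and $\rC^*(S)/\fK$ is commutative. Your closing remark reconciling part (iii) with the failure of the unique extension property is accurate but not needed for the proof.
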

\begin{proof}
Statement (i) follows directly from Theorem \ref{T:tightHR}. Next, note that $\rC^*(S)$ is nuclear. Hence, statement (ii) follows from Corollary \ref{C:nuclearrigprinc}. Finally, since $\pi$ annihilates $\fK$ and $\rC^*(S)/\fK$ is commutative, it follows that $\pi(\rC^*(S))$ is commutative, so statement (iii) is a consequence of Corollary \ref{C:homogrange}.
\end{proof}

\bibliographystyle{plain}
\bibliography{korovkin}
	
\end{document}